\documentclass{amsart}
\usepackage{amssymb}
\usepackage{txfonts}
\usepackage{amsfonts}
\usepackage{color}
\usepackage{amsmath}

\def\eqref#1{({\color{red}\ref{#1}})}

\def\eqref#1{(\ref{#1})}

\setcounter{MaxMatrixCols}{10}

\def\eqref#1{(\ref{#1})}

\newtheorem{theorem}{Theorem}[section]
\newtheorem{lemma}[theorem]{Lemma}
\newtheorem{proposition}[theorem]{Proposition}

\newtheorem{remark}[theorem]{Remark}
\numberwithin{equation}{section}

\newcommand\RR{{{\mathbb R}}}

\let\wt=\widetilde

\begin{document}
\title[The non cutoff Kac's equation]
{Gevrey regularizing effect of the Cauchy problem\\
for non-cutoff homogeneous Kac's equation}
\author[N. Lekrine]{Nadia Lekrine}
\address{Universit\'e de Rouen, UMR 6085-CNRS, Math\'ematiques, Avenue de
l'Universit\'e,\,\, BP.12, 76801 Saint Etienne du Rouvray, France}
\email{lekrinenadia@yahoo.fr}
\author[C.-J. Xu]{Chao-Jiang XU}
\address{Universit\'e de Rouen, UMR 6085-CNRS, Math\'ematiques, Avenue de
l'Universit\'e,\,\, BP.12, 76801 Saint Etienne du Rouvray, France}
\address{and School of mathematics, Wuhan University, 430072, Wuhan, China }
\email{Chao-Jiang.Xu\@@univ-rouen.fr}
\subjclass[2000]{35A05, 35B65,
35D10, 42A38, 60H07, 82B40}
\date{Revision - 25/08/2009}
\keywords{Non-cutoff Kac's equation, Boltzmann equation, Gevrey regularizing
effect, Cauchy problem, Fourier analysis}

\begin{abstract}
In this work, we consider a spatially homogeneous Kac's equation
with a non cutoff cross section. We prove that the weak solution of
the Cauchy problem is in {the Gevrey }class for positive time. This
is a Gevrey regularizing effect for non smooth initial datum. The
proof {  relies on} the Fourier analysis of Kac's operators and { on
an} exponential type {  mollifier}.
\end{abstract}

\maketitle

\section{Introduction}

\label{section1}

In this work, we consider the following Cauchy problem for spatially
homogeneous non linear Kac's equation,
\begin{equation}
\left\{
\begin{array}{ll}
\frac{\partial f}{\partial t}=K(f,\,\,f), & v\in {{\mathbb{R}}},\,\,t>0, \\
f|_{t=0}=f_{0}\,. &
\end{array}
\right.  \label{1.1}
\end{equation}%
where $f=f(t,\,v)$ is the nonnegative density distribution function of
particles with velocity $v\in {{\mathbb{R}}}$ at time $t$. The right hand
side of equation (\ref{1.1}) is given by Kac's bilinear collisional operator
\begin{equation*}
K(f,\,\,g)=\int_{{{\mathbb{R}}}}\int_{-\pi /2}^{\pi /2}\beta (\theta
)\left\{ f(v_{\ast }^{\prime })g(v^{\prime })-f(v_{\ast })g(v)\right\}
d\theta dv_{\ast }\,,
\end{equation*}%
where
\begin{equation*}
v^{\prime }=v\,\cos \theta -v_{\ast }\,\sin \theta ,\quad \,\,v_{\ast
}^{\prime }=v\,\sin \theta +v_{\ast }\,\cos \theta .
\end{equation*}%
We suppose that the cross-section kernel is non cut-off. To simplify the
notations, we suppose (see \cite{D95,Des-Fu-Ter} for the precise description
of cross-section kernel) that
\begin{equation}
\beta (\theta )=C_{0}\frac{|\cos \theta |\,\,\,\,\,}{\,\,|\sin \theta
|^{1+2s}},\,\,\,\,\,-\frac{\pi }{2}\leq \theta \leq \frac{\pi }{2}\,,
\label{1.1+1}
\end{equation}%
where $0<s<1$ and $C_{0}>0$, then
\begin{equation*}
\int_{-\pi /2}^{\pi /2}\beta (\theta )d\theta =+\infty ,
\end{equation*}%
and
\begin{equation}
\left\{\begin{split}
& \int_{-\pi /2}^{\pi /2}\beta (\theta )\,|\theta |\,d\theta =C_{s}<+\infty
,\quad 0<s<1/2, \\
& \int_{-\pi /2}^{\pi /2}\beta (\theta )\,\theta ^{2}\,d\theta
=C_{s}<+\infty ,\quad 0<s<1.
\end{split}%
\right.  \label{1.1+2}
\end{equation}

Hereafter, use the following function spaces: For $1\leq p\leq
+\infty ,\ell \in {{\mathbb{R}}}$,
\begin{equation*}
L_{\ell }^{p}({{\mathbb{R}}})=\Big\{f;\,\,\Vert f\Vert _{L_{\ell }^{p}}=\Big(%
\int_{{{\mathbb{R}}}}|\langle v\rangle ^{\ell }f(v)|^{p}dv\Big)%
^{1/p}<+\infty \Big\}
\end{equation*}%
where $\langle v\rangle =(1+|v|^{2})^{1/2}$.
\begin{equation*}
L\log L({{\mathbb{R}}})=\Big\{f;\,\,\Vert f\Vert _{L\log L}=\int_{{{\mathbb{R%
}}}}|f(v)|\log (1+|f(v)|)dv<+\infty \Big\}.
\end{equation*}%
For $k,\ell \in {{\mathbb{R}}}$,
\begin{equation*}
H_{\ell }^{k}({{\mathbb{R}}})=\left\{ f\in {\mathcal{S}}^{\prime }({{\mathbb{%
R}}});\langle v\rangle ^{\ell }f\in H^{k}({{\mathbb{R}}})\right\} .
\end{equation*}

We assume that the initial datum $f_0\equiv \hspace{-3mm} / \: 0$ satisfies
the natural boundedness on the mass, energy and entropy, that is,
\begin{equation}  \label{1.2+0}
f_0\geq 0,\,\,\,\, \int_{{{\mathbb{R}}}}f_0(v)\Big(1+|v|^2+\log(1+f_0(v))%
\Big)dv<+\infty.
\end{equation}

In \cite{D95}, L. Desvillettes has proved the existence of a nonnegative
weak solution to the Cauchy problem (\ref{1.1}), (see also \cite%
{graham-meleard} by using a stochastic calculus),
\begin{equation}  \label{1.2}
f\in L^\infty ([0, +\infty[; L^1_k({{\mathbb{R}}}))\, ,
\end{equation}
if $f_0\in L^1_k({{\mathbb{R}}})$ for some $k\geq 2$. The weak solution
satisfies the conservation of mass
\begin{equation}  \label{1.3}
\int_\RR f(t, v) dv = \int_\RR f_0(v) dv,\quad \forall t>0,
\end{equation}
the conservation of energy
\begin{equation}  \label{1.3b}
\int_\RR f(t, v) |v|^2dv = \int_\RR f_0(v)|v|^2 dv ,\quad \forall t>0,
\end{equation}
and also the entropy inequality
\begin{equation}  \label{1.3c}
\int_\RR f(t, v) \log f(t, v) dv \leq \int_\RR f_0(v) \log f_0(v) dv ,\quad
\forall t>0,
\end{equation}
but does not conserve the momentum.

L. Desvillettes proved also in \cite{D95} (see also \cite{fournier}), the $%
C^{\infty }$-regularity of weak solutions if $f_{0}\in L_{\ell }^{1}({{%
\mathbb{R}}})$ for any $\ell \in {\mathbb{N}}$. This regularizing
effect properties is now well-known for non cut-off homogeneous
Boltzmann equations (see also \cite{al-2,
al-3,desv-wen1,MUXY-DCDS}).

In this work, we consider the higher order regularity, the Gevrey
regularity of solutions of the Cauchy problem (\ref{1.1}). We start
by recalling  the definition of the Gevrey class functions. $u\in
G^{\alpha }({{\mathbb{R}}}^{n})$ (the Gevrey class function space
with index $\alpha )$, if for $\alpha \geq 1,$ there exists $C>0$
such that for any $k\in {\mathbb{N}}$,
\begin{equation*}
\Vert D^{k}u\Vert _{L^{2}({{\mathbb{R}}}^{n})}\leq C^{k+1}(k!)^{\alpha },
\end{equation*}%
or equivalently, there exists $c_{0}>0$ such that $e^{c_{0}\langle D\rangle
^{1/\alpha }}u\in L^{2}({{\mathbb{R}}}^{n})$, where
\begin{equation*}
\langle D\rangle =(1+|D_{v}|^{2})^{1/2},\,\quad \,\Vert D^{k}u\Vert _{L^{2}({%
{\mathbb{R}}}^{n})}^{2}=\sum_{|\beta |=k}\Vert D^{\beta }u\Vert _{L^{2}({{%
\mathbb{R}}}^{n})}^{2}.
\end{equation*}%
Note that $G^{1}({{\mathbb{R}}}^{n})$ is the usual analytic function space.
If $0<\alpha <1$, the above definition gives the ultra-analytical function
class. Recall that we give here the Gevrey class functions on ${{\mathbb{R}}}%
^{n} $, and so we can use the Fourier transformation and give an
equivalent definition by using {  a Fourier} multiplier
$e^{c_{0}\langle D\rangle ^{1/\alpha }}$, we can also replace $
L^{2}$-norm by $L^{\infty}$-norm.

Our result on the Gevrey regularity can be stated as follows.

\begin{theorem}
\label{theo1} Assume that the initial datum $f_{0}\in
L_{2+2s}^{1}\cap L\log L({{\mathbb{R}}})$, and the cross-section
$\beta $ satisfy (\ref{1.1+1}) with $0<s<\frac{1}{2}$. For $T_0>0$,
if $f\in L^{\infty }([0, T_0] ;L_{2+2s}^{1}\cap L\log
L({{\mathbb{R}}}))$ is a nonnegative weak solution of the Cauchy
problem (\ref{1.1}), then for any $0<s^{\prime }<s$, there exists
$0<T_*\leq T_0$ such that
\begin{equation*}
\displaystyle f(t,\cdot )\in G^{^{\frac{1}{2s^{\prime }}}}({{\mathbb{R}}})
\end{equation*}
for any $0<t\leq T_*$.
\end{theorem}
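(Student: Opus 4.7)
\bigskip

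\noindent\textbf{Proof proposal.}

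The plan is to run an energy estimate on the Fourier side, inserting an exponential mollifier with a small regularization parameter that will eventually be sent to zero. This is the Morimoto--Ukai style of argument adapted to Kac's kernel.

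First, I would Fourier--transform \eqref{1.1}. Testing against $\phi(v)=e^{-iv\xi}$ in the weak formulation of $K(f,f)$ and using the evenness of $\beta$ under $\theta \mapsto -\theta$, one arrives at
\begin{equation*}
\partial_t \hat f(t,\xi)=\int_{-\pi/2}^{\pi/2}\beta(\theta)\bigl[\hat f(\xi\cos\theta)\hat f(\xi\sin\theta)-\hat f(0)\hat f(\xi)\bigr]d\theta.
\end{equation*}
I then split the integrand as
\begin{equation*}
\hat f(0)\bigl[\hat f(\xi\cos\theta)-\hat f(\xi)\bigr]+\hat f(\xi\cos\theta)\bigl[\hat f(\xi\sin\theta)-\hat f(0)\bigr],
\end{equation*}
so that the first part is linear in $\hat f$ with the coefficient $\hat f(0)=\|f_0\|_{L^1}>0$ (conservation of mass), while the second is a genuinely bilinear perturbation which, thanks to $|\hat f(\xi\sin\theta)-\hat f(0)|\lesssim |\xi|\,|\sin\theta|$, is integrable against $\beta(\theta)\theta^2$, using \eqref{1.1+2}.

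The key analytic device is the mollifier
\begin{equation*}
G_\delta(t,\xi)=\frac{e^{c_0 t\la\xi\ra^{2s'}}}{1+\delta e^{c_0 t\la\xi\ra^{2s'}}},\qquad 0<\delta\le 1,
\end{equation*}
with $c_0>0$ to be tuned, which satisfies $G_\delta\le\delta^{-1}$ (hence $G_\delta\hat f\in L^2$ for each fixed $\delta$), $\partial_t G_\delta\le c_0\la\xi\ra^{2s'}G_\delta$, and $G_\delta\nearrow e^{c_0 t\la\xi\ra^{2s'}}$ as $\delta\to 0$. I would compute $\tfrac{d}{dt}\|G_\delta\hat f\|_{L^2}^2$, using the derivative of $G_\delta$ together with the above decomposition of $\widehat{K(f,f)}$. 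After symmetrization in $\xi$ on the linear term and a change of variable $\xi\mapsto\xi/\cos\theta$, one extracts a coercive term $-\kappa\,\hat f(0)\int\la\xi\ra^{2s}|G_\delta\hat f|^2\,d\xi$ (coming from $\int\beta(\theta)[1-|\cos\theta|^{something}]d\theta\sim\la\xi\ra^{2s}$ in the spirit of \cite{D95,al-3}), plus commutator remainders involving $G_\delta(\xi)-G_\delta(\xi\cos\theta)$. Those remainders are controlled pointwise by $C|\theta|^{2s'}\la\xi\ra^{2s'}G_\delta(\xi)$ modulo integrable-in-$\theta$ pieces, whence after integration in $\theta$ (using $0<s'<s<1/2$ and \eqref{1.1+2}) they contribute only a term of order $\la\xi\ra^{2s'}|G_\delta\hat f|^2$, strictly weaker than the coercivity. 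The bilinear perturbation is dominated uniformly by $\|G_\delta\hat f\|_{L^2}^2$ using $\|\hat f\|_{L^\infty}\le\|f_0\|_{L^1}$ and the $\theta^2$-integrability of $\beta$.

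Collecting everything yields a differential inequality of the form
\begin{equation*}
\frac{d}{dt}\|G_\delta\hat f\|_{L^2}^2+\bigl(\kappa\,\hat f(0)-Cc_0\bigr)\int\la\xi\ra^{2s'}|G_\delta\hat f|^2\,d\xi\le C'\|G_\delta\hat f\|_{L^2}^2,
\end{equation*}
valid on a small interval $[0,T_*]$. Choosing $c_0$ so that the bracket is nonnegative and applying Gronwall gives a bound uniform in $\delta$; letting $\delta\to 0$ via monotone convergence gives $e^{c_0 t\la D\ra^{2s'}}f(t,\cdot)\in L^2$ for $0<t\le T_*$, i.e.\ $f(t,\cdot)\in G^{1/(2s')}(\RR)$. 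The main obstacle is step (iii), the commutator estimate: one must check that the loss $\la\xi\ra^{2s'}$ created by differentiating the exponential weight across $\xi\mapsto\xi\cos\theta$ can be absorbed by the $\la\xi\ra^{2s}$ gain from Kac's singular kernel, which is exactly why the theorem requires the strict inequality $s'<s$ and the restriction $s<1/2$ (to keep $\int\beta(\theta)|\theta|\,d\theta$ finite when handling the bilinear error).
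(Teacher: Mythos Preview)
Your approach has the right architecture (exponential mollifier $G_\delta$, Bobylev formula, energy estimate), but the treatment of the bilinear term contains a genuine gap that propagates through the rest of the argument.

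You claim that after multiplying by $G_\delta(\xi)^2\overline{\hat f(\xi)}$, the bilinear piece $\hat f(\xi\cos\theta)[\hat f(\xi\sin\theta)-\hat f(0)]$ is ``dominated uniformly by $\|G_\delta\hat f\|_{L^2}^2$ using $\|\hat f\|_{L^\infty}\le\|f_0\|_{L^1}$''. This is not true: of the two factors $G_\delta(\xi)$, one pairs with $\overline{\hat f(\xi)}$, but the second cannot be absorbed by $\hat f(\xi\cos\theta)$ or $\hat f(\xi\sin\theta)$ using only $\|\hat f\|_{L^\infty}$, because $G_\delta(\xi)$ is exponentially large in $\xi$. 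The only way to distribute it is via the sub-additivity
\[
G_\delta(\xi)\ \le\ 3\,G_\delta(\xi\cos\theta)\,G_\delta(\xi\sin\theta),
\]
which the paper proves as \eqref{2.11}. Once you do this, the factor $\hat f(\xi\sin\theta)$ gets replaced by $G_\delta(\xi\sin\theta)\hat f(\xi\sin\theta)$, and the resulting sup-norm bound is $\|G_\delta\hat f\|_{L^\infty}\le\|G_\delta f\|_{L^1}$, \emph{not} $\|f_0\|_{L^1}$. The same phenomenon occurs in your commutator remainder: the correct estimate (Lemma~\ref{lemm2.2}) is
\[
|G_\delta(\xi)-G_\delta(\xi\cos\theta)|\ \le\ C\sin^2(\theta/2)\,\la\xi\ra^{2s'}\,G_\delta(\xi\cos\theta)\,G_\delta(\xi\sin\theta),
\]
and the extra $G_\delta(\xi\sin\theta)$ is not a harmless constant.

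This has two structural consequences that your sketch misses. First, to control $\|G_\delta f\|_{L^1}$ one needs the weighted norm $\|G_\delta f\|_{L^2_1}$ via the embedding $L^2_1(\RR)\hookrightarrow L^1(\RR)$; the paper therefore runs the entire estimate in $L^2_1$ (and correspondingly proves the commutator bound \eqref{2.12} with the weight $v$), not in plain $L^2$. Second, the bilinear/commutator contribution is \emph{cubic}, of the form $C\|G_\delta f\|_{L^2_1}\|G_\delta f\|_{H^{s'}_1}^2$, so after interpolation against the $H^s$ coercivity one does not get a linear Gronwall but a Bernoulli-type inequality
\[
\frac{d}{dt}\|G_\delta f(t)\|_{L^2_1}\ \le\ C_1\|G_\delta f(t)\|_{L^2_1}+C_2\|G_\delta f(t)\|_{L^2_1}^{\,1+s'/(s-s')},
\]
which blows up in finite time. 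This is precisely why the conclusion is only local ($0<t\le T_*$); your linear Gronwall would give a global statement, which should have been a red flag. Finally, the paper needs a preliminary $H^\infty_2$-smoothing step (Theorem~\ref{theo2.1}, using a polynomial mollifier $M_\delta$) before the Gevrey argument can be run, since one must justify using $G_\delta\la v\ra^2 G_\delta f$ as a test function starting only from $f_0\in L^1_{2+2s}\cap L\log L$; your sketch does not address this.
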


\begin{remark}
The above results is a smoothing effect property in the Gevrey class
for the Cauchy problem. We suppose nothing about regularity and high
order moment controls for the initial datum.
\end{remark}

Recall that  Kac's equation  is obtained when one considers
radially symmetric solutions of the spatially homogeneous Boltzmann
equation for Maxwellian molecules (see \cite{D95}). The Cauchy
problem for the spatially homogeneous Boltzmann equation is defined
by :
\begin{equation}
\frac{\partial g}{\partial t}=Q(g,\,g),\enskip v\in {{\mathbb{R}}}%
^{3},\,\,\,t>0\,;\hspace{1cm}g|_{t=0}=g_{0}\,,  \label{1.4}
\end{equation}%
where the Boltzmann collision operator $Q(g,\,f)$ is a bi-linear functional
given by
\begin{equation}
Q(g,\,f)=\int_{{{\mathbb{R}}}^{3}}\int_{\mathbb{S}^{2}}B\left( {v-v_{\ast }}%
,\sigma \right) \left\{ g(v_{\ast }^{\prime })f(v^{\prime })-g(v_{\ast
})f(v)\right\} d\sigma dv_{\ast }\,,  \label{1.5}
\end{equation}%
for $\sigma \in \mathbb{S}^{2}$ and where
\begin{equation*}
v^{\prime }=\frac{v+v_{\ast }}{2}+\frac{|v-v_{\ast }|}{2}\sigma
,\,\,\,v_{\ast }^{\prime }=\frac{v+v_{\ast }}{2}-\frac{|v-v_{\ast }|}{2}%
\sigma \,.
\end{equation*}%
The non-negative function $B(z,\sigma )$ called the Boltzmann collision
kernel depends only on $|z|$ and the scalar product $<\frac{z}{|z|},\sigma >$%
. In most of the cases, the collision kernel $B$ can not be expressed
explicitly. However, to capture its main property, it can be assumed to be
in the form
\begin{equation*}
B(|v-v_{\ast }|,\cos \theta )=\Phi (|v-v_{\ast }|)b(\cos \theta ),\,\,\,\cos
\theta =\big<\frac{v-v_{\ast }}{|v-v_{\ast }|}\,,\,\sigma \big>,\,\,\,-\frac{%
\pi }{2}\leq \theta \leq \frac{\pi }{2}.
\end{equation*}%
The Maxwellian case corresponds to $\Phi \equiv 1$. Except for hard sphere
model, the function $b(\cos \theta )$ has a singularity at $\theta =0$. We
assume that
\begin{equation}
\sin \theta \,b(\cos \theta )\,\,\approx \,\,K\theta ^{-1-2s}\,\,\,%
\mbox{when}\,\,\theta \rightarrow 0,  \label{1.6}
\end{equation}%
where $K>0,0<s<1$. Remark that the solution of Boltzmann equation satisfies 
also the conservation of mass, energy and the entropy inequality. 

A function $g$ is radially symmetric with respect to $v\in
{{\mathbb{R}}}^{3} $, if it satisfy the property
$$
g(t, v)=g(t, Av),\quad v\in \mathbb{R}^3
$$
for any rotation $A$ in $\mathbb{R}^3$. We proved the following
results.
\begin{theorem}
\label{theo2} Assume that the initial datum $g_{0}\in
L_{2+2s}^{1}\cap L\log L({{\mathbb{R}}}^{3}), g_0\geq 0$ is radially
symmetric. Let $\Phi \equiv 1$ and  let $b$ satisfy (\ref{1.6})
with $0<s<\frac{1}{2}$. If $g$ is a nonnegative radially symmetric
weak solution of the Cauchy problem (\ref{1.4}) such that
$g\in L^{\infty }(]0,+\infty \lbrack ;L_{2+2s}^{1}\cap L\log L({{\mathbb{R}}}%
^{3}))$ , then
\begin{equation*}
g(t,\,\cdot \,)\in G^{^{\frac{1}{2s^{\prime }}}}({{\mathbb{R}}}_{v}^{3})
\end{equation*}%
for any $t>0$ and any $0<s^{\prime }<s$.
\end{theorem}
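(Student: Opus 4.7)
The plan is to derive Theorem~\ref{theo2} from Theorem~\ref{theo1} via the reduction, going back to \cite{D95}, between radially symmetric Maxwellian Boltzmann solutions and Kac's equation. Concretely, if $g(t,v)$ is radial on $\mathbb{R}^3$, we write $g(t,v)=F(t,|v|)$ and define the even function $\tilde f(t, v_1):=F(t,|v_1|)$ for $v_1\in\mathbb{R}$. Because $\Phi\equiv 1$ (Maxwellian molecules), the $v_*$- and $\sigma$-integrations in $Q(g,g)$ can be reorganized by integrating out one angular variable on $\mathbb{S}^2$; the resulting effective one-dimensional angular cross-section is, up to a harmless constant factor, of Kac type (\ref{1.1+1}) with the \emph{same} singularity exponent $s$. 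Thus $\tilde f$ is a weak solution of the Kac equation (\ref{1.1}).

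The first step is to carry out this reduction rigorously at the level of weak solutions: verify that the hypothesis $g\in L^\infty(]0,+\infty[; L^1_{2+2s}\cap L\log L(\mathbb{R}^3))$ transfers to $\tilde f\in L^\infty(]0,+\infty[; L^1_{2+2s}\cap L\log L(\mathbb{R}))$ (spherical coordinates, the extra Jacobian $4\pi r^2$ being harmless thanks to the weight $\langle v\rangle^{2+2s}$), and that $\tilde f\ge 0$ satisfies the mass/energy/entropy bounds \eqref{1.2+0}. Theorem~\ref{theo1} then applies: for each $0<s'<s$ there exists $T_*>0$ with $\tilde f(t,\cdot)\in G^{1/(2s')}(\mathbb{R})$ for all $0<t\leq T_*$.

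The second step is to lift Gevrey regularity of $\tilde f$ on $\mathbb{R}$ to Gevrey regularity of $g$ on $\mathbb{R}^3$. Since $\tilde f$ is even, the radial function $g(v)=\tilde f(|v|)$ is well defined and smooth, and the identity $e^{c_0\langle D\rangle^{1/\alpha}}g\in L^2(\mathbb{R}^3)\iff e^{c_0\langle\xi\rangle^{1/\alpha}}\hat g(\xi)\in L^2(\mathbb{R}^3)$ reduces matters to the Fourier side, where $\hat g$ is itself radial and related to the one-dimensional Fourier transform of $\tilde f$ by an explicit (Hankel-type) integral transform that preserves the pointwise bound $|\hat g(\xi)|\leq C e^{-c_0\langle\xi\rangle^{2s'}}$ coming from $\tilde f\in G^{1/(2s')}(\mathbb{R})$. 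Hence $g(t,\cdot)\in G^{1/(2s')}(\mathbb{R}^3_v)$ on $(0,T_*]$.

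The last step is to extend from $(0,T_*]$ to all $t>0$: for any $t_0>0$, the function $g(t_0,\cdot)$ is still radial and lies in $L^1_{2+2s}\cap L\log L(\mathbb{R}^3)$, so restarting the Cauchy problem at $t_0$ and repeating the previous steps produces Gevrey regularity on $[t_0,t_0+T_*]$; a covering argument then gives the conclusion for every $t>0$. The main difficulty I expect lies in the first step: writing the radial reduction cleanly at the \emph{weak} level (without assuming any smoothness of $\tilde f$) and checking that the effective Kac kernel produced by integrating $b(\cos\theta)$ over $\mathbb{S}^2$ really has the singularity exponent $s$ and not a degraded one, so that the hypothesis of Theorem~\ref{theo1} is met for the same $s$.
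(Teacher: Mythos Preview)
Your overall plan --- reduce to Kac via radial symmetry, apply Theorem~\ref{theo1}, lift back, then globalize --- matches the paper's strategy, but the specific reduction you propose is wrong and this is a genuine gap.

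You set $\tilde f(t,v_1)=F(t,|v_1|)$, i.e.\ the \emph{restriction} of $g$ to a line through the origin. This function does not solve Kac's equation: evaluating $Q(g,g)$ at $(v_1,0,0)$ still involves a full $\mathbb{R}^3\times\mathbb{S}^2$ integration over $(v_*,\sigma)$, and radial symmetry of $g$ does not collapse this to the one-dimensional integral $K(\tilde f,\tilde f)$. Moreover $\tilde f$ is not even well defined for $g\in L^1(\mathbb{R}^3)$ (restriction to a null set), and your integrability claim fails: $g\in L^1_{2+2s}(\mathbb{R}^3)$ gives $\int_0^\infty F(r)\,r^2\langle r\rangle^{2+2s}\,dr<\infty$, which does \emph{not} control $\int_0^\infty F(r)\langle r\rangle^{2+2s}\,dr$ --- the Jacobian $r^2$ helps in 3D, it does not help you in 1D.

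The correct reduction (this is what the paper does, following \cite{D95}) is to take the \emph{marginal}
\[
f(t,u)=\int_{\mathbb{R}^2} g(t,v_1,v_2,u)\,dv_1\,dv_2,
\]
so that $\hat f(t,\tau)=\widehat g(t,0,0,\tau)$ directly. Then Bobylev's formula \eqref{4.1} for $Q(g,g)$ becomes, after using that $\widehat g$ is radial, exactly the Fourier-side Kac equation for $\hat f$ with $\beta(|\theta|)=\tfrac12|\sin\theta|\,b(\cos\theta)$; this gives the correct singularity exponent $s$ for free, with no ``integrating out one angular variable'' computation needed. The lift back to $g$ is then the trivial inequality $\|e^{\frac{c_0}{2}\langle D\rangle^{2s'}}g\|_{L^2(\mathbb{R}^3)}^2=C\int_0^\infty |e^{\frac{c_0}{2}\langle\tau\rangle^{2s'}}\hat f(\tau)|^2\tau^2\,d\tau\le C\|e^{c_0\langle D\rangle^{2s'}}f\|_{L^2(\mathbb{R})}^2$ --- no Hankel transform is needed.

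Two smaller points. First, the marginal $f$ is not obviously in $L\log L(\mathbb{R})$; the paper instead checks that $f$ is \emph{uniformly integrable} (Lemma~\ref{lemm2}), which is what the coercivity estimate actually needs (Remark~\ref{rema2.1}). Second, for globalization the paper does not iterate $T_*$-steps; it uses uniqueness for Kac together with the \emph{propagation} of Gevrey regularity (Theorem~\ref{theo3.2}, from \cite{Des-Fu-Ter}) once the solution is Gevrey at one positive time. Your restart argument could be made to work since the relevant norms are uniform in $t$, but you would have to track that $T_*$ depends only on those uniform bounds.
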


Remark that for the non cut-off spatially homogeneous Boltzmann equation, we
have the $H^{\infty }$-regularizing effect of weak solutions (see also \cite%
{desv-wen1,HMUY,MUXY-DCDS, al-3}). Namely if $f$  is a weak
solution of the Cauchy problem (\ref{1.4}) and the cross section $b$ satisfy
(\ref{1.6}), then we have $f(t,\cdot )\in H^{+\infty}
({\mathbb{R}})$ for any $0<t$.

Notice that, for the Boltzmann equation, the local solutions having
the Gevrey regularity have been constructed in \cite{ukai} for
initial data having higher Gevrey regularity, and the propagation of
Gevrey regularity for solutions of Boltzmann equation is studied in
\cite{Des-Fu-Ter}. The result given here is concerned with the
production of the Gevrey regularity for weak solutions whose initial
data have no assumption on the regularity. This regularizing effect
property of { the Cauchy }problem is analogous to the results of
\cite{MUXY-DCDS} where linearized Boltzmann equation is considered.
In \cite{mo-xu2}, we have the ultra-analytical regularizing effect
of the Cauchy problem in $G^{{\frac{1}{2}}}({{\mathbb{R}}}^{3})$ for
the homogeneous Landau equations, which is optimal {  as seen from
the Cauchy} problem of heat equation.

\vskip0.5cm

\section{Fourier analysis of Kac's operators}

\label{section2} 
\smallskip

We will now be interested in studying the Fourier analysis of the
Kac's collision operator. This is a key step in the regularity
analysis of weak solutions. For simplification of notations, we use
 $(\cdot \,,\,\cdot )$ instead of $(\cdot \,,\,\cdot
)_{L^{2}({{\mathbb{R}}}_{v})}$. We have firstly the following
coercivity estimate deduced from the non cut-off of collision
kernel.
\begin{proposition}\label{prop2.0} 
Assume that the cross-section is non cut-off, satisfies the
assumption (\ref{1.1+1}). Let $f\geq 0, f\neq0, f\in L^{1}_{1}({{\mathbb{R}}}%
) \cap L\log L({{\mathbb{R}}})$, then there exists a constant
$c_{f}>0$, depending only on $\beta,\|f\|_{L^1_1}$, and $\|
f\|_{LLogL}$, such that
\begin{equation}  \label{2.1}
-\Big(K(f,\, g),\,\, g\Big)\geq c_f \|g\|^2_{H^s({{\mathbb{R}}}_v)}-
C\|f\|_{L^1}\|g\|^2_{L^2}
\end{equation}
for any smooth function $g\in H^1({{\mathbb{R}}})$.
\end{proposition}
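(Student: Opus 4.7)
The plan is to rewrite $-(K(f,g),g)$ as a manifestly nonnegative quadratic functional plus a controlled $L^2$ remainder, and then show that this functional dominates $\|g\|_{H^s}^2$. First I would exploit the orthogonality of the collisional map $(v,v_*)\mapsto(v',v_*')$: changing variables in the gain term and symmetrizing via the identity $g(v)[g(w)-g(v)] = -\tfrac{1}{2}(g(v)-g(w))^2 + \tfrac{1}{2}(g(w)^2 - g(v)^2)$, with $w = v\cos\theta + v_*\sin\theta$, yields
\[
-(K(f,g),g) = \tfrac{1}{2}\iiint \beta(\theta)\,f(v_*)\,[g(v)-g(v\cos\theta+v_*\sin\theta)]^2\,dv\,dv_*\,d\theta \;-\; C_{s}\|f\|_{L^1}\|g\|_{L^2}^2,
\]
where $C_{s} = \tfrac{1}{2}\int\beta(\theta)(\sec\theta-1)\,d\theta$ arises from the Jacobian after the change $v\mapsto w$ in the $g(w)^2$ piece. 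This $C_{s}$ is finite precisely because $s<1/2$, as the integrand behaves like $|\theta|^{1-2s}$ near $\theta=0$.

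For the positive bilinear form, I would pass to Fourier in $v$ at fixed $(\theta,v_*)$. Writing $G_{\theta,v_*}(v) = g(v\cos\theta+v_*\sin\theta)$ and using $\hat G_{\theta,v_*}(\xi) = \frac{1}{\cos\theta}e^{iv_*\xi\tan\theta}\hat g(\xi/\cos\theta)$, which for small $\theta$ is close to $e^{iv_*\xi\theta}\hat g(\xi)$, one can establish a pointwise lower bound
\[
\int \beta(\theta)\bigl|\hat g(\xi) - \tfrac{1}{\cos\theta}e^{iv_*\xi\tan\theta}\hat g(\xi/\cos\theta)\bigr|^2 d\theta \;\geq\; c\,|v_*|^{2s}|\xi|^{2s}|\hat g(\xi)|^2 - R(\xi,v_*),
\]
the main term relying on the standard identity $\int|\theta|^{-1-2s}(1-\cos(a\theta))\,d\theta \asymp |a|^{2s}$, and $R(\xi,v_*)$ being absorbed into an $L^2$ remainder via Plancherel and the dilation Jacobian. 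Integrating against $f(v_*)dv_*$ produces a lower bound of the form $c\bigl(\int f(v_*)|v_*|^{2s}dv_*\bigr)\|g\|_{\dot H^s}^2$ up to admissible error, and the full $H^s$-norm is recovered after combining with the $L^2$ remainder.

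The main obstacle is ensuring strict positivity of $c_f = c\int f(v_*)|v_*|^{2s}dv_*$ with the required dependence on $\|f\|_{L^1_1}$ and $\|f\|_{L\log L}$, since this integral would vanish for a Dirac mass at $v_*=0$. The $L\log L$ hypothesis rules out such concentration quantitatively: by the de la Vall\'ee--Poussin criterion there is some $M = M(\|f\|_{L^1},\|f\|_{LLogL})$ with $\int_{\{f>M\}}f\,dv_* \leq \|f\|_{L^1}/4$; then choosing $\eta$ proportional to $\|f\|_{L^1}/M$ yields $\int_{|v_*|<\eta}f\,dv_* \leq \|f\|_{L^1}/2$, so $\int_{|v_*|\geq\eta}f\,dv_* \geq \|f\|_{L^1}/2$ and hence $\int f(v_*)|v_*|^{2s}dv_* \geq \eta^{2s}\|f\|_{L^1}/2 > 0$. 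The $L^1_1$ bound enters as an upper bound to keep all remainder integrals finite (since $|v_*|^{2s} \leq 1 + |v_*|$). Making this non-concentration argument fully quantitative and bookkeeping the various error terms to land cleanly in the stated coercivity inequality is the technical crux.
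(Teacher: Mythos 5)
Your opening step (the symmetrization identity plus the Maxwellian change of variables producing the factor $\sec\theta-1$) is exactly the paper's decomposition, and your bound for that term is the same; note only that $\int\beta(\theta)(\sec\theta-1)\,d\theta\sim\int|\theta|^{1-2s}d\theta$ is finite for all $0<s<1$, not ``precisely because $s<1/2$'' --- the proposition is indeed stated for the full range. For the coercive part you then diverge genuinely from the paper: the paper integrates in $v_*$ first, invokes Bobylev's formula and the coercivity machinery of Alexandre--Desvillettes--Villani--Wennberg \cite{al-1}, with $L\log L$ entering through the non-concentration properties (H-1)--(H-2) of Remark \ref{rema2.1}; you instead work in Fourier at fixed $(\theta,v_*)$ and make non-concentration quantitative through the de la Vall\'ee--Poussin bound $\int f(v_*)|v_*|^{2s}dv_*\gtrsim\eta^{2s}\|f\|_{L^1}$ with $\eta=\eta(\|f\|_{L^1},\|f\|_{L\log L})$. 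That last argument is correct and is an attractive self-contained substitute for the citation, and your oscillation term $\int\beta(\theta)\big(1-\cos(v_*\xi\tan\theta)\big)d\theta\gtrsim\min(|v_*\xi|^{2s},|v_*\xi|^{2})\geq|v_*\xi|^{2s}-1$ does yield the main term plus an honest $L^2$ remainder.

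The gap is in your remainder $R(\xi,v_*)$. Splitting $\hat g(\xi)-\tfrac{1}{\cos\theta}e^{iv_*\xi\tan\theta}\hat g(\xi/\cos\theta)$ into the oscillation part $(1-e^{iv_*\xi\tan\theta})\hat g(\xi)$ and the dilation part $e^{iv_*\xi\tan\theta}\big(\hat g(\xi)-\tfrac{1}{\cos\theta}\hat g(\xi/\cos\theta)\big)$, the dilation part is \emph{not} an $L^2$ remainder: the quantity $\int\beta(\theta)\,\|\hat g-\tfrac{1}{\cos\theta}\hat g(\cdot/\cos\theta)\|_{L^2_\xi}^{2}\,d\theta$ cannot be bounded by $C\|g\|_{L^2}^2$. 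Indeed, take $\hat g=\mathbf{1}_{[N,N+1]}$: the dilation shifts the support by about $N\theta^2/2$, so for $|\theta|\gtrsim N^{-1/2}$ there is no cancellation and the integral is at least of order $\int_{N^{-1/2}}^{1}\theta^{-1-2s}d\theta\sim N^{s}$, i.e.\ this term is of size $\|g\|_{\dot H^{s/2}}^{2}$ rather than $\|g\|_{L^2}^{2}$; so the step ``absorbed into an $L^2$ remainder via Plancherel and the dilation Jacobian'' fails as stated. The repair is to estimate this term by $C\|f\|_{L^1}\|g\|_{H^{\sigma}}^{2}$ for some $s/2<\sigma<s$ (a Littlewood--Paley or Taylor-plus-interpolation argument, since the dilation parameter deviates from $1$ by $O(\theta^2)$), and then absorb it with $\|g\|_{H^\sigma}^2\leq\varepsilon\|g\|_{H^s}^2+C_\varepsilon\|g\|_{L^2}^2$, choosing $\varepsilon\|f\|_{L^1}\leq c_f/2$ \emph{after} the main coercive constant $c_f$ has been fixed. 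This works, but the resulting constant in front of $\|g\|_{L^2}^2$ then depends on $c_f$ (hence on $\|f\|_{L\log L}$) rather than only on $\|f\|_{L^1}$ as in \eqref{2.1}; that weaker form is still sufficient for how the proposition is used later, but you must carry out this absorption explicitly --- it is precisely the bookkeeping you deferred, and it is where your proof currently breaks.
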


\begin{remark}\label{rema2.1}
In the proof of Proposition \ref{prop2.0} , the following properties are essential 
(see (44) in \cite{al-1})

(H-1) there exists a $r > 0$ such that $\int_{\{v\in\RR; |v|\leq r\}} f(v)dv \geq \frac{3}{4} \|f \|_{L^{1}}$

(H-2) there exists a $\delta > 0$ such that $\int_{A}f(v)dv <\frac{1}{4}\|f\|_{L^{1}}$ for any measurable 
set $A\subset\RR$ satsifying $|A| < \delta$.

As stated in Lemma 2.2 of \cite{Des-Fu-Ter}(p.1738), Lebesgue's 
dominated convergence theorem shows that both properties follow only from the assumption
$f \in L^{1}$. However, the proof of Theorem \ref{theo1} and Theorem \ref{theo2} require that $r$ 
and $\delta$ can be chosen uniformly with respect to $t$ if Proposition \ref{prop2.0} 
is applied to solution $f(t, v)$. Under the conservation of mass (\ref{1.3}), (H-1) and (H-2), respectively, 
follow from (\ref{1.3b}) and (\ref{1.3c}), respectively. In the proof of Theorem \ref{1.3}, 
the property (H-2) will be checked directly without the entropy inequality (see Lemma \ref{lemm2} below).
\end{remark}

Recall the following weak formulation for collision operators
\begin{equation*}
\Big(K(f,\,g),\,\,h\Big)=\iint_{{{\mathbb{R}}}^{2}}\int_{-\frac{\pi }{2}}^{%
\frac{\pi }{2}}\beta (\theta )f(v_{\ast })g(v)\Big(h(v^{\prime })-h(v)\Big)%
d\theta dv_{\ast }dv,
\end{equation*}%
for suitable functions $f,g,h$ with reals values. Then
\begin{eqnarray*}
\Big(-K(f,\,g),\,\,g\Big) &=&\frac{1}{2}\iint_{{{\mathbb{R}}}^{2}}\int_{-%
\frac{\pi }{2}}^{\frac{\pi }{2}}\beta (\theta )f(v_{\ast })\Big(g(v^{\prime
})-g(v)\Big)^{2}d\theta dv_{\ast }dv \\
&&-\frac{1}{2}\iint_{{{\mathbb{R}}}^{2}}\int_{-\frac{\pi }{2}}^{\frac{\pi }{2%
}}\beta (\theta )f(v_{\ast })\Big(g(v^{\prime })^{2}-g(v)^{2}\Big)d\theta
dv_{\ast }dv\,.
\end{eqnarray*}%
The second term of right hand side can be estimated by using the
Cancellation lemma of \cite{al-1}. But in  the Maxwellien case, by
an appropriate change of variable, we then have,
\begin{eqnarray*}
&&\left\vert \frac{1}{2}\int_{{{\mathbb{R}}}^{2}}\iint_{-\frac{\pi }{2}}^{%
\frac{\pi }{2}}\beta (\theta )f(v_{\ast })\Big(g(v^{\prime })^{2}-g(v)^{2}%
\Big)d\theta dv_{\ast }dv\right\vert \\
&=&\left\vert \frac{1}{2}\iint_{{{\mathbb{R}}}^{2}}\int_{-\frac{\pi }{2}}^{%
\frac{\pi }{2}}\beta (\theta )f(v_{\ast })g(v)^{2}\Big(\frac{1}{\cos \theta }%
-1\Big)d\theta dv_{\ast }dv\right\vert \\
&\leq &C\iint_{{{\mathbb{R}}}^{2}}\int_{-\frac{\pi }{2}}^{\frac{\pi }{2}}%
\big|\sin (\theta )\big|^{-1-2s}\Big|\sin \Big(\frac{\theta }{2}\Big)\Big|%
^{2}|f(v_{\ast })|g(v)^{2}d\theta dv_{\ast }dv \\
&\leq &C\Vert f\Vert _{L^{1}}\Vert g\Vert _{L^{2}}^{2}\,.
\end{eqnarray*}%
The coercivity term in $H^{s}$ is deduced from the following positive term,
\begin{equation*}
\frac{1}{2}\int_{{{\mathbb{R}}}^{2}}\int_{-\frac{\pi }{2}}^{\frac{\pi }{2}%
}\beta (\theta )f(v_{\ast })\Big(g(v^{\prime })-g(v)\Big)^{2}d\theta
dv_{\ast }dv.
\end{equation*}%
Here we need the Bobylev formula, i. e. the Fourier transform of
collision operators :
\begin{equation}
{\mathcal{F}}\Big(K(f,\,g)\Big)(\xi )=\frac{1}{2\pi }\int_{-\frac{\pi }{2}}^{%
\frac{\pi }{2}}\beta (\theta )\left\{ \hat{f}(\xi \,\sin {\theta })\hat{g}%
(\xi \,\cos {\theta })-\hat{f}(0)\hat{g}(\xi )\right\} d\theta \,,
\label{2.2}
\end{equation}%
for suitable functions $f$ and $g$ and by using  both properties (1) and (2) and the 
unifom integrability of $f_{t}$. (see \cite{al-1,al-3,MUXY-DCDS}).
From the above formula, we can get also the following upper bound
estimates (see \cite{HMUY,MUXY-DCDS}). For $m,\ell \in
{{\mathbb{R}}}$, and for suitable functions $f,\,g,$ we have
\begin{equation}
\Vert K(f,\,g)\Vert _{H_{\ell }^{m}({{\mathbb{R}}}_{v})}\leq C\Vert f\Vert
_{L_{\ell ^{+}+2s}^{1}({{\mathbb{R}}}_{v})}\Vert g\Vert _{H_{(\ell
+2s)^{+}}^{m+2s}({{\mathbb{R}}}_{v})}\,,  \label{estimate-E}
\end{equation}%
where $\alpha ^{+}=\max \{\alpha ,0\}$.

\bigbreak To study the Gevrey regularity of the weak solution, as in \cite%
{MUXY-DCDS,mo-xu2}, we consider the exponential type mollifier. For $%
0<\delta <1$, $c_{0}>0$ and $0<s^{\prime }<s$, we set
\begin{equation*}
G_{\delta }(t,\,\xi )=\frac{e^{c_{0}\,t\,\langle \xi \rangle ^{2s^{\prime }}}%
}{1+\delta e^{c_{0}\,t\,\langle \xi \rangle ^{2s^{\prime }}}}
\end{equation*}%
where
\begin{equation*}
\langle \xi \rangle =(1+|\xi |^{2})^{\frac{1}{2}},\,\,\,\xi \in {{\mathbb{R}}%
}.
\end{equation*}%
Then, for any $0<\delta <1$,
\begin{equation}
G_{\delta }(t,\,\xi )\in L^{\infty }(]0,T[\times {{\mathbb{R}}}),
\label{2.4}
\end{equation}%
and
\begin{equation}
\lim_{\delta \rightarrow 0}G_{\delta }(t,\,\xi )=e^{c_{0}\,t\,\langle \xi
\rangle ^{2s^{\prime }}}.  \label{2.5}
\end{equation}
Denote by $G_{\delta }(t,\,D_{v}),$ the Fourier multiplier of symbol $%
G_{\delta }(t,\,\xi )$,
\begin{equation*}
G_{\delta }\,g(t,\,v)=G_{\delta
}(t,\,D_{v})g(t,\,v)={\mathcal{F}}_{\xi\, \rightarrow\, v}^{-1}\big(
G_{\delta }(t,\,\xi )\hat{g}(t,\xi )\big) .
\end{equation*}
Then our aim is to prove the uniform boundedness (with respect to
$0<\delta
<1$) of the term $\Vert G_{\delta }(t,\,D_{v})f(t,\,\cdot )\Vert _{L^{2}({{%
\mathbb{R}}})}$ for the weak solution of the Cauchy problem
(\ref{1.1}). In what follows, we will use the same notation
$G_{\delta }$ for the
pseudo-differential operators $G_{\delta }(t,D_{v})$ and also its symbol $%
G_{\delta }(t,\xi )$.

\begin{lemma}
\label{lemm2.1} Let $T>0, c_0>0$, We have that for any $0<\delta<1$ and $%
0\leq t\leq T,\, \xi\in{{\mathbb{R}}}$,
\begin{equation*}
\left|\partial_t G_\delta(t, \xi)\right|\leq c_0\langle \xi\rangle
^{2s^{\prime}} G_\delta(t, \xi) ,
\end{equation*}
\begin{equation*}
\left|\partial_{\xi} G_\delta(t, \xi)\right|\leq {2s^{\prime}}\, c_0\, t\,
\langle \xi\rangle^{{2s^{\prime}}-1} G_\delta(t, \xi)\,
\end{equation*}
and
\begin{equation*}
\left|\partial^2_{\xi} G_\delta(t, \xi)\right|\leq C \langle
\xi\rangle^{2(2s^{\prime}-1)} G_\delta(t, \xi)\,
\end{equation*}
with $C>0$ independent of $\delta$.
\end{lemma}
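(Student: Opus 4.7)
The three estimates are essentially calculus exercises on the explicit function $G_\delta(t,\xi)=E/(1+\delta E)$ with $E=E(t,\xi):=e^{c_0 t\langle\xi\rangle^{2s'}}$. The point is that every $t$- or $\xi$-derivative that falls on $E$ pulls out a power of $\langle\xi\rangle^{2s'}$ (or a $\xi$-factor producing one power less), while the key algebraic cancellation happens in the quotient rule: one finds $\partial G_\delta = \partial E/(1+\delta E)^2$, not $\partial E/(1+\delta E)$, so comparison with $G_\delta = E/(1+\delta E)$ yields an extra factor $1/(1+\delta E)\le 1$ that kills the $\delta$-dependence.

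For the $t$-estimate, I would simply differentiate: $\partial_t G_\delta = c_0\langle\xi\rangle^{2s'} E/(1+\delta E)^2 = c_0\langle\xi\rangle^{2s'} G_\delta/(1+\delta E)$, which is bounded by $c_0\langle\xi\rangle^{2s'} G_\delta$. For the first $\xi$-estimate, using $\partial_\xi\langle\xi\rangle^{2s'} = 2s'\,\xi\,\langle\xi\rangle^{2s'-2}$ one gets $\partial_\xi G_\delta = 2s'\,c_0\,t\,\xi\,\langle\xi\rangle^{2s'-2} E/(1+\delta E)^2$, and the bound $|\xi|\le\langle\xi\rangle$ together with the same quotient trick gives the claimed $2s'\,c_0\,t\,\langle\xi\rangle^{2s'-1}G_\delta$.

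For the second $\xi$-estimate I would differentiate once more using $\partial_\xi^2 G_\delta = \partial_\xi^2 E/(1+\delta E)^2 - 2\delta(\partial_\xi E)^2/(1+\delta E)^3$. A direct computation of $\partial_\xi^2 E$ produces two types of terms: a quadratic-in-chain-rule piece of order $(c_0 t)^2 \xi^2\langle\xi\rangle^{4s'-4} E$, and a lower-order curvature piece of order $c_0 t\,\langle\xi\rangle^{2s'-2} E$. The first is $O(t^2\langle\xi\rangle^{2(2s'-1)}E)$, the second is even smaller since $2s'-2\le 2(2s'-1)$ for $s'\le 1/2$ (and in any case stays below the claimed exponent after absorbing $\langle\xi\rangle$'s). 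The second term of the quotient-rule expression, $2\delta(\partial_\xi E)^2/(1+\delta E)^3$, is handled by noting $\delta E/(1+\delta E)\le 1$, reducing it to $(t\langle\xi\rangle^{2s'-1})^2 \cdot E/(1+\delta E)^2 = O(t^2\langle\xi\rangle^{2(2s'-1)}G_\delta)$. Collecting everything and absorbing $t\le T$ into $C$ gives the required bound.

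The only mild subtlety is to keep track of the fact that all constants must be independent of $\delta\in(0,1)$, and the only mechanism providing this uniformity is the algebraic identity $E/(1+\delta E)^k \le G_\delta/(1+\delta E)^{k-1}\le G_\delta$ for $k\ge 1$; once this is noted, every step reduces to bookkeeping on powers of $\langle\xi\rangle$. I do not expect any real obstacle; the lemma is a preparatory computation to be used later when $G_\delta$ is commuted with the Kac operator.
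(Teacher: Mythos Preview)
Your proposal is correct and follows essentially the same approach as the paper: the paper simply records the explicit formulas for $\partial_t G_\delta$, $\partial_\xi G_\delta$, and $\partial_\xi^2 G_\delta$ (its equations (2.6)--(2.8)), each of which contains the factor $G_\delta(t,\xi)\cdot(1+\delta e^{c_0 t\langle\xi\rangle^{2s'}})^{-1}$ (or a related factor bounded by $1$), and then reads off the three estimates exactly as you do. Your organization of the second-derivative computation via $\partial_\xi^2 E/(1+\delta E)^2 - 2\delta(\partial_\xi E)^2/(1+\delta E)^3$ is a trivial repackaging of the paper's formula (2.8), and your key observation that $E/(1+\delta E)^k\le G_\delta$ for $k\ge 1$ is precisely the mechanism the paper is using.
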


In fact, we have the following formulas
\begin{equation}  \label{2.6}
\partial_t G_\delta(t, \xi)=c_0\langle \xi\rangle ^{2s^{\prime}} G_\delta(t,
\xi) \frac {1} {1+\delta e^{c_0 t\langle \xi\rangle ^{2s^{\prime}}}},
\end{equation}
\begin{equation}  \label{2.7}
\partial_{\xi} G_\delta(t, \xi)=2s^{\prime}\, c_0\, t\,
(1+|\xi|^2)^{s^{\prime}-1}\xi\, G_\delta(t, \xi)\,\frac {1} {1+\delta e^{c_0
t\langle \xi\rangle ^{2s^{\prime}}}},
\end{equation}
and
\begin{equation}  \label{2.8}
\begin{split}
&\partial^2_{\xi} G_\delta(t, \xi)= \Big(2s^{\prime}\, c_0\, t\,
(1+|\xi|^2)^{s^{\prime}-1}\xi\Big)^2\, G_\delta(t, \xi)\,\frac {1-\delta
e^{c_0 t\langle \xi\rangle ^{2s^{\prime}}}} {\left(1+\delta e^{c_0 t\langle
\xi\rangle ^{2s^{\prime}}}\right)^2} \\
&\,\,\,\,+\, 2s^{\prime}\,c_0\, t \Big((1+|\xi|^2)^{s^{\prime}-1}+
2(s^{\prime}-1)\xi^2(1+|\xi|^2)^{s^{\prime}-2}\Big)\, G_\delta(t, \xi)\,
\frac {1} {1+\delta e^{c_0 t\langle \xi\rangle ^{2s^{\prime}}}}\,.
\end{split}%
\end{equation}

\begin{lemma}
\label{lemm2.2} There exists $C>0$ such that for all $0<\delta<1$ and $%
\xi\in {{\mathbb{R}}}$, we have,
\begin{equation}  \label{2.9}
|G_{\delta}(\xi)-G_{\delta}(\xi \cos\theta)|\leq C
\sin^{2}(\theta/2)\langle\xi \rangle^{2s^{\prime}}
G_{\delta}(\xi\cos\theta)\, G_{\delta}(\xi\sin\theta),
\end{equation}
and
\begin{equation}  \label{2.10}
\left|\big(\partial_\xi G_{\delta}\big)(\xi)-\big(\partial_\xi G_{\delta}%
\big)(\xi \cos\theta)\right|\leq C \sin^{2}(\theta/2)\langle\xi
\rangle^{(4s^{\prime}-1)^+}
G_{\delta}(\xi\cos\theta)G_{\delta}(\xi\sin\theta),
\end{equation}
where $(4s^{\prime}-1)^+=\max\{4s^{\prime}-1, 0\}$.
\end{lemma}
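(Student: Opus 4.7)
Both bounds reduce to a single auxiliary inequality that I will establish first: uniformly in $\delta\in(0,1)$,
\begin{equation*}
G_\delta(\eta) \leq 4\, G_\delta(\xi\cos\theta)\, G_\delta(\xi\sin\theta)\qquad \text{whenever}\qquad \langle\eta\rangle \leq \langle\xi\rangle.
\end{equation*}
To prove this, set $U = e^{c_0 t\langle\eta\rangle^{2s'}}$, $Y = e^{c_0 t\langle\xi\cos\theta\rangle^{2s'}}$ and $Z = e^{c_0 t\langle\xi\sin\theta\rangle^{2s'}}$. Subadditivity of $x\mapsto x^{s'}$ on $\RR_+$ (valid for $0<s'<1$), combined with the identity $\langle\xi\cos\theta\rangle^2 + \langle\xi\sin\theta\rangle^2 = 2+\xi^2 \geq \langle\xi\rangle^2$, gives $\langle\xi\rangle^{2s'}\leq \langle\xi\cos\theta\rangle^{2s'} + \langle\xi\sin\theta\rangle^{2s'}$ and hence $U\leq YZ$. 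The auxiliary inequality then amounts to $U(1+\delta Y)(1+\delta Z) \leq 4 YZ(1+\delta U)$, which follows from a short case split: if $\max(\delta Y,\delta Z)\leq 1$ then $(1+\delta Y)(1+\delta Z)\leq 4$; if, say, $\delta Y > 1$, use $1+\delta Y \leq 2\delta Y$ together with $\delta U/(1+\delta U)\leq 1$ and $Z\geq 1$.

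For (\ref{2.9}), I exploit the explicit identity
\begin{equation*}
G_\delta(\xi) - G_\delta(\xi\cos\theta) = \frac{U-Y}{(1+\delta U)(1+\delta Y)}.
\end{equation*}
With $\Delta := c_0 t(\langle\xi\rangle^{2s'} - \langle\xi\cos\theta\rangle^{2s'})\geq 0$, the elementary $e^\Delta - 1 \leq \Delta e^\Delta$ yields $U-Y\leq \Delta U$, so that $|G_\delta(\xi) - G_\delta(\xi\cos\theta)| \leq \Delta\, G_\delta(\xi) \leq 4\Delta\, G_\delta(\xi\cos\theta)\, G_\delta(\xi\sin\theta)$ by the auxiliary inequality. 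It remains to show $\Delta \leq C\sin^2(\theta/2)\langle\xi\rangle^{2s'}$, for which I split on whether $\cos\theta \geq 1/2$. When $\cos\theta \geq 1/2$ (equivalently $\sin^2(\theta/2) \leq 1/4$), the mean value theorem applied to $x\mapsto (1+x)^{s'}$, together with $\sin^2\theta \leq 4\sin^2(\theta/2)$ and the easy bound $\langle\xi\cos\theta\rangle \geq \langle\xi\rangle/2$, produces the required estimate. When $\cos\theta < 1/2$, $\sin^2(\theta/2)$ is bounded below by a constant, so the trivial bound $\Delta \leq c_0 t\langle\xi\rangle^{2s'}$ already suffices after absorbing $\sin^{-2}(\theta/2)$ into the constant.

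For (\ref{2.10}), I use the mean-value identity
\begin{equation*}
\partial_\xi G_\delta(\xi) - \partial_\xi G_\delta(\xi\cos\theta) = (\xi - \xi\cos\theta)\int_0^1 \partial^2_\eta G_\delta(\eta_\tau)\, d\tau,
\end{equation*}
with $\eta_\tau = \xi\cos\theta + \tau(\xi - \xi\cos\theta)$ and $|\xi - \xi\cos\theta| = 2|\xi|\sin^2(\theta/2)$. The second-derivative bound of Lemma \ref{lemm2.1} together with the auxiliary inequality applied to $\eta_\tau$ (which satisfies $\langle\eta_\tau\rangle \leq \langle\xi\rangle$) reduce matters to controlling $|\xi|\langle\eta_\tau\rangle^{2(2s'-1)}$. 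When $\cos\theta \geq 1/2$, the inequality $\langle\eta_\tau\rangle \geq \langle\xi\cos\theta\rangle \geq \langle\xi\rangle/2$ yields $|\xi|\langle\eta_\tau\rangle^{2(2s'-1)} \leq C\langle\xi\rangle^{4s'-1} \leq C\langle\xi\rangle^{(4s'-1)^+}$ (the last step using $\langle\xi\rangle\geq 1$). When $\cos\theta < 1/2$, bound $\partial_\xi G_\delta(\xi)$ and $\partial_\xi G_\delta(\xi\cos\theta)$ separately with the first-derivative bound of Lemma \ref{lemm2.1}; the elementary inequality $2s'-1 \leq (4s'-1)^+$ and the uniform lower bound $G_\delta\geq 1/(1+\delta)\geq 1/2$ (which lets one insert missing $G_\delta$ factors at the cost of a constant) give the estimate after absorbing the now bounded-below $\sin^2(\theta/2)$.

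The chief obstacle throughout is the regime $\cos\theta \approx 0$, where a naive Taylor expansion produces a factor $|\xi|$ that is not dominated by $\langle\xi\rangle^{2s'}$. The two-case split circumvents this by reducing to the trivial absolute-value bound in that regime, and the auxiliary inequality absorbs the polynomial deficit into the exponentially large factor $G_\delta(\xi\sin\theta)\approx G_\delta(\xi)$ on the right-hand side of both (\ref{2.9}) and (\ref{2.10}).
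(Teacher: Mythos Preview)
Your proof is correct and follows essentially the same strategy as the paper: a case split on the size of $|\theta|$, combined with the key ``splitting'' inequality $G_\delta(\xi)\le C\,G_\delta(\xi\cos\theta)G_\delta(\xi\sin\theta)$ (your auxiliary inequality is the paper's estimate (2.11), proved by the same subadditivity of $x\mapsto x^{s'}$). The only notable technical variation is that for (\ref{2.9}) you use the explicit identity $G_\delta(\xi)-G_\delta(\xi\cos\theta)=(U-Y)/((1+\delta U)(1+\delta Y))$ together with $e^\Delta-1\le\Delta e^\Delta$, whereas the paper applies the Taylor formula with the first-derivative bound from Lemma~\ref{lemm2.1}; both routes lead to the same $\Delta\,G_\delta(\xi)$-type control, and for (\ref{2.10}) your argument via the second-derivative bound and mean value matches the paper's exactly.
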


\begin{proof}
For the estimate (\ref{2.9}), we have, by using the Taylor
formula
\begin{equation*}
G_{\delta }(\xi )-G_{\delta }(\xi \cos \theta )=\big(\xi -\xi \cos \theta %
\big)\int_{0}^{1}\big(\partial _{\xi }G_{\delta }\big)(\xi \cos \theta +\tau
(\xi -\xi \cos \theta ))d\tau \,
\end{equation*}%
where $\xi _{\tau }=\xi \cos \theta +\tau (\xi -\xi \cos \theta )$. Then (%
\ref{2.7}) implies
\begin{equation*}
|G_{t,\,\delta }(\xi )-G_{\delta }(t,\,\xi \cos \theta )|\leq 4s^{\prime
}\,c_{0}\,t\,|\xi |\,\sin ^{2}(\theta /2)\int_{0}^{1}G_{\delta }(t,\,\xi
_{\tau })\langle \xi _{\tau }\rangle ^{{2s^{\prime }}-1}d\tau \,.
\end{equation*}%
For $0\leq \tau \leq 1$ and $-\pi /4\leq \theta \leq \pi /4$,
\begin{equation*}
\frac{\sqrt{2}}{2}|\xi |\leq |\xi _{\tau }|=|\xi \cos \theta +\tau (\xi -\xi
\cos \theta )|\leq |\xi |,
\end{equation*}%
which implies, for $0<2s^{\prime }<1$, that there exists $C_{s^{\prime }}>0$
such that
\begin{equation*}
\langle \xi _{\tau }\rangle ^{2s^{\prime }}\leq \langle \xi \rangle
^{2s^{\prime }},\,\,\,\,\,\,\,\,\langle \xi _{\tau }\rangle ^{2s^{\prime
}-1}\leq C_{s}\langle \xi \rangle ^{2s^{\prime }-1}.
\end{equation*}%
On the other hand, $G_{\delta }(t,\xi )=G_{\delta }(t,|\xi |)$ is
increasing with respect to $|\xi |$, since for $\xi >0$, $\partial
_{\xi }G_{\delta }(t,\xi )>0$, then
\begin{equation*}
G_{\delta }(t,\,\xi _{\tau })\leq G_{\delta }(t,\,\xi ).
\end{equation*}%
By using
\begin{equation*}
|\xi |^{2}=|\xi \cos \theta |^{2}+|\xi \sin \theta |^{2},
\end{equation*}%
and
\begin{equation*}
(1+a+b)^{2s^{\prime }}\leq (1+a)^{2s^{\prime }}+(1+b)^{2s^{\prime
}},\,\,\,\,\,\,(1+\delta e^{\alpha })(1+\delta e^{\beta })\leq 3(1+\delta
e^{\alpha +\beta }),
\end{equation*}%
we get
\begin{equation}
G_{\delta }(\xi )\leq 3G_{\delta }(\xi \,\cos \theta )G_{\delta }(\xi \,\sin
\theta ).  \label{2.11}
\end{equation}%
Thus
\begin{equation*}
|G_{\delta }(\xi )-G_{\delta }(\xi \,\cos \theta )|\leq C\sin ^{2}(\theta
/2)\langle \xi \rangle ^{2s^{\prime }}G_{\delta }(\xi \,\cos \theta
)G_{\delta }(\xi \sin \theta ).
\end{equation*}%
We have proved the estimate (\ref{2.9}) when $|\theta |\leq \pi /4$. If $\pi
/4\leq |\theta |\leq \pi /2$, we have
\begin{eqnarray*}
&&|G_{\delta }(\xi )-G_{\delta }(\xi \,\cos \theta )|\leq |G_{\delta }(\xi
)|+|G_{\delta }(\xi \,\cos \theta )|\leq 2|G_{\delta }(\xi )| \\
&\leq &6\,\,G_{\delta }(\xi \,\cos \theta )G_{\delta }(\xi \sin \theta )\leq
C\sin ^{2}(\theta /2)\,G_{\delta }(\xi \,\cos \theta )G_{\delta }(\xi \sin
\theta ).
\end{eqnarray*}%
For the estimate (\ref{2.10}), by using (\ref{2.8}), we have that if $%
|\theta |\leq \pi /4$,
\begin{eqnarray*}
&&\left\vert \big(\partial _{\xi }G_{\delta }\big)(\xi )-\big(\partial _{\xi
}G_{\delta }\big)(\xi \cos \theta )\right\vert =\left\vert (\xi -\xi \,\cos
\theta )\int_{0}^{1}\big(\partial _{\xi }^{2}G_{\delta }\big)(\xi _{\tau
})d\tau \right\vert \\
&&\hskip3cm\leq C|\xi |\,\sin ^{2}(\theta /2)\,\,\langle \xi \rangle
^{2(2s^{\prime }-1)}\int_{0}^{1}G_{\delta }(\xi _{\tau })d\tau \\
&&\hskip3cm\leq C\,\sin ^{2}(\theta /2)\,\,\langle \xi \rangle ^{4s^{\prime
}-1}G_{\delta }(\xi \sin \theta )G_{\delta }(t,\xi \cos \theta ).
\end{eqnarray*}%
The case $\pi /4\leq |\theta |\leq \pi /2$ is similar to (\ref{2.9}). Thus,
we have proved Lemma\ref{lemm2.2}.
\end{proof}

We now study the commutators of Kac's collision operators with the
above mollifier operators.

\begin{proposition}
\label{prop2.1} Assume that $0<s^{\prime}<1/2$, Let $f, g\in L^2_1({{\mathbb{%
R}}}_v)$ and $h\in H^{s^{\prime}}({{\mathbb{R}}}_v)$, then we
have that
\begin{equation}  \label{2.12+0}
\begin{split}
&\left|\Big(G_\delta\,K(f, \, g),\,\, h\Big)- \Big(K(f, \,G_\delta\,g),\,\, h%
\Big)\right| \\
&\leq C\,\|{G_\delta}\,f\|_{L^2_1({{\mathbb{R}}})} \,\|{G_\delta}%
\,g\|_{H^{s^{\prime}}({{\mathbb{R}}})}\|h\|_{H^{s^{\prime}}({{\mathbb{R}}})},
\end{split}%
\end{equation}
and
\begin{equation}  \label{2.12}
\begin{split}
&\left|\Big((v\,G_\delta)\,K(f, \, g),\,\, h\Big)- \Big(K(f, \,
(v\,G_\delta)\,g),\,\, h\Big)\right| \\
&\leq C\,\left(\|f\|_{L^1_1({{\mathbb{R}}})}+\|{G_\delta}\,f\|_{L^2_1({{%
\mathbb{R}}})}\right) \,\|{G_\delta}\,g\|_{H^{s^{\prime}}_1({{\mathbb{R}}}%
)}\|h\|_{H^{s^{\prime}}({{\mathbb{R}}})}.
\end{split}%
\end{equation}
\end{proposition}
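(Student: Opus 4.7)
My plan is to work in Fourier and exploit Bobylev's identity \eqref{2.2}. For \eqref{2.12+0}, a direct computation using \eqref{2.2} shows that the $\hat f(0)\hat g(\xi)$ contributions from $G_\delta K(f,g)$ and $K(f,G_\delta g)$ cancel, giving
\[
\mathcal{F}\bigl(G_\delta K(f,g)-K(f,G_\delta g)\bigr)(\xi)
 = \frac{1}{2\pi}\int \beta(\theta)\,\hat f(\xi\sin\theta)\hat g(\xi\cos\theta)\bigl[G_\delta(\xi)-G_\delta(\xi\cos\theta)\bigr]d\theta.
\]
I would then insert \eqref{2.9}, which introduces an integrable $\sin^2(\theta/2)$-weight (using \eqref{1.1+2}) and factors the two $G_\delta$-contributions into $\widehat{G_\delta f}(\xi\sin\theta)$ and $\widehat{G_\delta g}(\xi\cos\theta)$. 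The $\langle\xi\rangle^{2s'}$ weight is distributed by splitting the $\theta$-integral: for $|\theta|\leq\pi/4$ use $\langle\xi\rangle^{2s'}\leq 2^{s'}\langle\xi\cos\theta\rangle^{2s'}$, while for $|\theta|\geq\pi/4$ the cruder bound $|G_\delta(\xi)-G_\delta(\xi\cos\theta)|\leq CG_\delta(\xi\cos\theta)G_\delta(\xi\sin\theta)$ (which is what is actually produced in the proof of Lemma \ref{lemm2.2} in that range) already suffices. A Cauchy--Schwarz in $\xi$, the change of variable $\eta=\xi\cos\theta$, and the one-dimensional Sobolev embedding $\|\widehat{G_\delta f}\|_{L^\infty}\leq C\|G_\delta f\|_{L^2_1}$ then close the estimate.

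For \eqref{2.12}, multiplication by $v$ corresponds in Fourier to $i\partial_\xi$, so after applying the Leibniz rule to $\partial_\xi\bigl[G_\delta(\xi)\mathcal{F}(K(f,g))(\xi)\bigr]$ and to the analogous derivative for $K(f,vG_\delta g)$, and cancelling the $\hat f(0)$-pieces, the commutator symbol decomposes as $T_1+T_2+T_3$ with
\[
T_1=\hat f(\xi\sin\theta)\hat g(\xi\cos\theta)\bigl[G_\delta'(\xi)-G_\delta'(\xi\cos\theta)\bigr],
\]
\[
T_2=G_\delta(\xi)\sin\theta\,\hat f'(\xi\sin\theta)\hat g(\xi\cos\theta),
\]
\[
T_3=\hat f(\xi\sin\theta)\hat g'(\xi\cos\theta)\bigl[G_\delta(\xi)\cos\theta-G_\delta(\xi\cos\theta)\bigr].
\]
$T_1$ is estimated by \eqref{2.10} of Lemma \ref{lemm2.2} (using $(4s'-1)^+\leq 2s'$ for $s'<1/2$, so the weight is no worse than in \eqref{2.12+0}); $T_3$ by combining \eqref{2.9} with $|\cos\theta-1|\leq 2\sin^2(\theta/2)$, the factor $\hat g'(\eta)=-i\widehat{vg}(\eta)$ being what produces the $H^{s'}_1$-weight on $g$ after Plancherel and the change of variable. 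For $T_2$ I would split $G_\delta(\xi)=G_\delta(\xi\cos\theta)+[G_\delta(\xi)-G_\delta(\xi\cos\theta)]$: the first piece loses the $\langle\xi\rangle^{2s'}$ weight and is controlled by the pointwise bound $|\hat f'(\eta)|=|\widehat{vf}(\eta)|\leq\|f\|_{L^1_1}$, producing the $\|f\|_{L^1_1}$-term; the second piece has the structure of $T_1$ (with a harmless extra $|\sin\theta|$), and using the identity $G_\delta(\eta)\hat f'(\eta)=-i\widehat{vG_\delta f}(\eta)-G_\delta'(\eta)\hat f(\eta)$ together with Lemma \ref{lemm2.1}, it splits into a contribution controlled by $\|G_\delta f\|_{L^2_1}$ and one where $\|\widehat{vG_\delta f}\|_{L^2}=\|vG_\delta f\|_{L^2}\leq\|G_\delta f\|_{L^2_1}$ is used via Plancherel after invoking the weighted Sobolev bound $\|\widehat{G_\delta g}(\cdot)\langle\cdot\rangle^{s'}\|_{L^\infty}\leq C\|G_\delta g\|_{H^{s'}_1}$ to handle the $\widetilde G$ factor.

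The main obstacle I expect is the $T_2$ contribution for \eqref{2.12}: one must carefully distribute the $G_\delta$ and $\langle\xi\rangle^{2s'}$ factors between $\widehat{G_\delta g}$, between $\widehat{G_\delta f}$ or $\widehat{vG_\delta f}$, and $\hat h$, trading between the pointwise $L^\infty$-control of $|\widehat{vf}|$ by $\|f\|_{L^1_1}$ and the $L^2$-control of $\widehat{vG_\delta f}$ by $\|G_\delta f\|_{L^2_1}$. This trade-off is precisely why the sum $\|f\|_{L^1_1}+\|G_\delta f\|_{L^2_1}$ appears on the right-hand side of \eqref{2.12} rather than a single norm.
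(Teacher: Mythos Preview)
Your proposal is correct and follows essentially the same route as the paper. The paper's decomposition for \eqref{2.12} writes the commutator as $(I)+(II)$ with $(II)$ further split into $A_1+A_2+A_3$; your $T_2$ is exactly $(I)$, your $T_1$ is $A_3$, and your $T_3$ combines $A_1+A_2$ (since $G_\delta(\xi)\cos\theta-G_\delta(\xi\cos\theta)=[G_\delta(\xi)-G_\delta(\xi\cos\theta)]+G_\delta(\xi)(\cos\theta-1)$). The handling of each piece---the $I_1/I_2$ split of $T_2$, the use of \eqref{2.9}--\eqref{2.10}, the Sobolev embedding $L^2_1\subset L^1$ for the $L^\infty_\xi$ bounds, and the commutator identity you write for $G_\delta\hat f'$ (which is the Fourier-side form of the paper's \eqref{2.13})---all match the paper's argument.
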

\begin{proof}
By definition, we have, for a suitable function $F$,
\begin{equation}
{\mathcal{F}}(G_{\delta }\,F)(\xi )=G_{\delta }(t,\xi )\hat{F}(\xi ),
\label{2.13+01}
\end{equation}%
and
\begin{equation}
{\mathcal{F}}((v\,G_{\delta })\,F)(\xi )=i\partial _{\xi }\big(G_{\delta }\,%
\hat{F}\big)(\xi )=i(\partial _{\xi }G_{\delta })(\xi )\,\hat{F}(\xi
)+iG_{\delta }(\xi )\,(\partial _{\xi }\hat{F})(\xi ).  \label{2.13+02}
\end{equation}%
By using the Bobylev formula (\ref{2.2}) and the Plancherel
formula,
\begin{equation*}
\begin{split}
& (2\pi )^{1/2}\left\{ \Big(G_{\delta }\,K(f,\,g),\,\,h\Big)-\Big(%
K(f,\,G_{\delta }\,g),\,\,h\Big)\right\} \\
=& \int_{{{\mathbb{R}}}_{\xi }}\int_{-\frac{\pi }{2}}^{\frac{\pi }{2}}\beta
(\theta )G_{\delta }(\xi )\Big\{\hat{f}(\xi \,\sin {\theta })\hat{g}(\xi
\,\cos {\theta })-\hat{f}(0)\hat{g}(\xi )\Big\}d\theta \,\overline{\hat{h}%
(\xi )}d\xi \\
& -\int_{{{\mathbb{R}}}_{\xi }}\int_{-\frac{\pi }{2}}^{\frac{\pi }{2}}\beta
(\theta )\Big\{\hat{f}(\xi \,\sin {\theta })\,\big({\mathcal{F}}(G_{\delta
}\,{g})\big)(\xi \,\cos {\theta })-\hat{f}(0)\big({\mathcal{F}}(G_{\delta }\,%
{g})\big)(\xi )\Big\}d\theta \,\,\overline{\hat{h}(\xi )}d\xi \\
=& \int_{{{\mathbb{R}}}_{\xi }}\int_{-\frac{\pi }{2}}^{\frac{\pi }{2}}\beta
(\theta )\hat{f}(\xi \,\sin {\theta })\Big\{G_{\delta }(\xi )-G_{\delta
}(\xi \,\cos {\theta })\Big\}\hat{g}(\xi \,\cos \theta )\,\,\overline{\hat{h}%
(\xi )}\,d\theta \,d\xi \,.
\end{split}%
\end{equation*}%
The above formula can be justified by the cutoff approximation of
collision kernel $\beta (\theta )$, then (\ref{2.9}) and
(\ref{1.1+2}) imply
\begin{equation*}
\begin{split}
& \left\vert \Big(G_{\delta }\,K(f,\,g),\,\,h\Big)-\Big(K(f,\,G_{\delta
}\,g),\,\,h\Big)\right\vert \\
\leq & C\int_{{{\mathbb{R}}}_{\xi }}\int_{-\frac{\pi }{2}}^{\frac{\pi }{2}%
}\beta (\theta )\sin ^{2}(\theta /2)|G_{\delta }(\xi \,\sin \theta )\,\hat{f}%
(\xi \,\sin {\theta })| \\
& \hskip3cm\times |G_{\delta }(\xi \,\cos {\theta })\,\hat{g}(\xi \,\cos
\theta )|\,\,\langle \xi \rangle ^{2s^{\prime }}\,|\hat{h}(\xi )|\,d\theta
\,d\xi \\
\leq & C\Vert |G_{\delta }\,\hat{f}\Vert _{L^{\infty }({{\mathbb{R}}}_{\xi
})}\int_{-\frac{\pi }{2}}^{\frac{\pi }{2}}\beta (\theta )\sin ^{2}(\theta /2)
\\
& \hskip2cm\times \,\left( \int_{{{\mathbb{R}}}_{\xi }}\,\langle \xi \rangle
^{2s^{\prime }}\,\left\vert G_{\delta }(\xi \,\cos {\theta })\hat{g}(\xi
\,\cos \theta )\right\vert ^{2}d\xi \right) ^{1/2}\,\,\Vert h\Vert
_{H^{s^{\prime }}({{\mathbb{R}}})}\,d\theta \\
\leq & C\Vert |G_{\delta }\,f\Vert _{L^{1}({{\mathbb{R}}}_{v})}\int_{-\frac{%
\pi }{2}}^{\frac{\pi }{2}}\beta (\theta )\frac{\sin ^{2}(\theta
/2)}{{|\cos \theta |^{1/2+s'}}}d\,\theta \,\Vert \,\langle \,\cdot
\,\rangle ^{s^{\prime }}\,G_{\delta }\hat{g}\Vert
_{L^{2}({{\mathbb{R}}}_{\xi })}\,\,\Vert h\Vert
_{H^{s^{\prime }}({{\mathbb{R}}})} \\
\leq & C\Vert |G_{\delta }\,f\Vert _{L_{1}^{2}({{\mathbb{R}}}_{v})}\Vert
G_{\delta }\,g\Vert _{H^{s^{\prime }}({{\mathbb{R}}})}\,\,\Vert h\Vert
_{H^{s^{\prime }}({{\mathbb{R}}})}\,,
\end{split}%
\end{equation*}%
where we have used the following continuous embedding
\begin{equation*}
L_{\alpha }^{2}({{\mathbb{R}}})\subset L^{1}({{\mathbb{R}}}),\quad \alpha
>1/2.
\end{equation*}%
We have proved (\ref{2.12+0}).

To treat (\ref{2.12}), by using (\ref{2.13+02}), we similarly have,
\begin{equation*}
\begin{split}
& (2\pi )^{1/2}\left\{ \Big((v\,G_{\delta })\,K(f,\,g),\,\,h\Big)-\Big(%
K(f,\,(v\,G_{\delta })\,g),\,\,h\Big)\right\} \\
=& \int_{{{\mathbb{R}}}_{\xi }}\int_{-\frac{\pi }{2}}^{\frac{\pi }{2}}\beta
(\theta )\Big\{i\partial _{\xi }\left( G_{\delta }(\xi )\hat{f}(\xi \,\sin {%
\theta })\hat{g}(\xi \,\cos {\theta })\right) \\
& \hskip3cm-\hat{f}(\xi \,\sin {\theta }){\mathcal{F}}\big((v\,G_{\delta
})\,g\big)(\xi \,\cos {\theta })\Big\}\,\,\overline{\hat{h}(\xi )}\,\,d\xi
\,d\theta \\
=& i\int_{{{\mathbb{R}}}_{\xi }}\int_{-\frac{\pi }{2}}^{\frac{\pi }{2}}\beta
(\theta )\,\sin \theta \,(\partial _{\xi }\hat{f})(\xi \,\sin {\theta }%
)G_{\delta }(\xi )\hat{g}(\xi \,\cos {\theta })\,\,\overline{\hat{h}(\xi )}%
\,\,d\xi \,d\theta \\
+& i\int_{{{\mathbb{R}}}_{\xi }}\int_{-\frac{\pi }{2}}^{\frac{\pi }{2}}\beta
(\theta )\hat{f}(\xi \,\sin {\theta })\Big\{\partial _{\xi }\big(G_{\delta
}(\xi )\hat{g}(\xi \,\cos {\theta })\big)-\big(\partial _{\xi }\,(G_{\delta
}\,\hat{g})\big)(\xi \,\cos {\theta })\Big\}\,\,\overline{\hat{h}(\xi )}%
\,\,d\xi \,d\theta \, \\
=& \,\,(I)+(II).
\end{split}%
\end{equation*}%
For the term $(I)$, we have
\begin{equation*}
\begin{split}
&|(I)|\leq  \int_{{{\mathbb{R}}}_{\xi }}\int_{-\frac{\pi }{2}}^{\frac{\pi }{2%
}}\beta (\theta )\,|\sin \theta |\,|(\partial _{\xi }\hat{f})(\xi \,\sin {%
\theta })|\,\,\big|G_{\delta }(\xi \,\cos \theta )\hat{g}(\xi \,\cos {\theta
})\big|\,\,\big|\overline{\hat{h}(\xi )}\big|\,\,d\xi \,d\theta \\
+& \int_{{{\mathbb{R}}}_{\xi }}\int_{-\frac{\pi }{2}}^{\frac{\pi
}{2}}\beta
(\theta )\,|\sin \theta |\,|(\partial _{\xi }\hat{f})(\xi \,\sin {\theta }%
)|\,\,\Big|G_{\delta }(\xi )-G_{\delta }(\xi \,\cos \theta )\Big||\hat{g}%
(\xi \,\cos {\theta })|\,\,\big|\overline{\hat{h}(\xi )}\big|\,\,d\xi
\,d\theta \\
&\leq  \,\,I_{1}+I_{2}.
\end{split}%
\end{equation*}%
Firstly, (\ref{1.1+2}) with the hypothesis $0<s<1/2$ implies that
\begin{equation*}
\begin{split}
I_{1}& \leq \Vert \partial _{\xi }\hat{f}\Vert _{L^{\infty }({{\mathbb{R}}}%
_{\xi })}\int_{{{\mathbb{R}}}_{\xi }}\int_{-\frac{\pi }{2}}^{\frac{\pi }{2}%
}\beta (\theta )|\sin \theta |\,\,\big|G_{\delta }(\xi \,\cos {\theta })\,%
\hat{g}(\xi \,\cos {\theta })\big|\,\,\big|\hat{h}(\xi )\big|\,\,d\xi
\,d\theta \\
& \leq C\Vert f\Vert _{L_{1}^{1}({{\mathbb{R}}}_{v})}\int_{-\frac{\pi }{2}}^{%
\frac{\pi }{2}}\beta (\theta )\frac{|\sin \theta |}{{|\cos \theta |^{1/2}}}%
\,d\theta \,\,\,\Vert \hat{h}\Vert _{L^{2}({{\mathbb{R}}}_{\xi })} \\
& \hskip2cm\times \Big(\int_{{{\mathbb{R}}}_{\xi }}\big|G_{\delta }(\xi
\,\cos {\theta })\,\hat{g}(\xi \,\cos {\theta })\big|^{2}d(\xi \,\cos {%
\theta })\Big)^{1/2}\,\, \\
& \leq C\Vert f\Vert _{L_{1}^{1}({{\mathbb{R}}}_{v})}\Vert G_{\delta }\,{g}%
\Vert _{L^{2}({{\mathbb{R}}}_{v})}\,\Vert h\Vert _{L^{2}({{\mathbb{R}}}%
_{v})}\,.
\end{split}
\end{equation*}
For the term $I_{2}$, by using (\ref{2.9}) we have the following estimates
which are also true for $0<s<1$),
\begin{equation*}
\begin{split}
I_{2}& \leq \int_{{{\mathbb{R}}}_{\xi }}\int_{-\frac{\pi }{2}}^{\frac{\pi }{2%
}}\beta (\theta )|\sin \theta |\,\sin ^{2}(\theta /2)\,\,|G_{\delta }(\xi
\,\sin {\theta })(\partial _{\xi }\hat{f})(\xi \,\sin {\theta })| \\
& \hskip2cm\times \,\,\big|G_{\delta }(\xi \,\cos {\theta })\,\hat{g}(\xi
\,\cos {\theta })\big|\,\,\langle \xi \rangle ^{2s^{\prime }}\,\big|\hat{h}%
(\xi )\big|\,\,d\xi \,d\theta \\
& \leq C\Vert \langle \,\cdot \,\rangle ^{s^{\prime }}G_{\delta }\,\hat{g}%
\Vert _{L^{\infty }({{\mathbb{R}}}_{\xi })}\int_{-\frac{\pi }{2}}^{\frac{\pi
}{2}}\beta (\theta )\frac{|\sin \theta |\,\sin ^{2}(\theta /2)}{|\sin \theta
|^{1/2}}\,d\theta \,\,\Vert h\Vert _{H^{s^{\prime }}({{\mathbb{R}}}_{v})} \\
& \hskip2cm\times \Big(\int_{{{\mathbb{R}}}_{\xi }}\big|G_{\delta }(\xi
\,\sin {\theta })\,(\partial _{\xi }\hat{f})(\xi \,\sin {\theta })\big|%
^{2}d(\xi \,\sin {\theta })\Big)^{1/2}\,\, \\
& \leq C\Vert \langle D_{v}\rangle ^{s^{\prime }}G_{\delta }\,{g}\Vert
_{L^{1}({{\mathbb{R}}}_{v})}\int_{-\frac{\pi }{2}}^{\frac{\pi }{2}}\beta
(\theta )\frac{|\sin \theta |\,\sin ^{2}(\theta /2)}{|\sin \theta |^{1/2}}%
\,d\theta \,\,\Vert h\Vert _{H^{s^{\prime }}({{\mathbb{R}}}_{v})} \\
& \hskip2cm\times \Big(\int_{{{\mathbb{R}}}_{\xi }}\big|G_{\delta }(\xi
\,\sin {\theta })\,(\partial _{\xi }\hat{f})(\xi \,\sin {\theta })\big|%
^{2}d(\xi \,\sin {\theta })\Big)^{1/2}\,\, \\
& \leq C\Vert G_{\delta }\,{g}\Vert _{H_{1}^{s^{\prime }}({{\mathbb{R}}}%
_{v})}\Vert G_{\delta }\,(v\,f)\Vert _{L^{2}({{\mathbb{R}}}_{v})}\,\Vert
h\Vert _{H^{s^{\prime }}({{\mathbb{R}}}_{v})}\,.
\end{split}%
\end{equation*}%
Moreover, for a suitable function $F$, we have
\begin{equation*}
G_{\delta }\,(v\,F)=v\,G_{\delta }\,F+[G_{\delta },\,\,v]\,F,
\end{equation*}%
and
\begin{equation*}
{\mathcal{F}}\big([G_{\delta },\,\,v]\,F\big)(\xi )=i(\partial _{\xi
}G_{\delta })(\xi )\hat{F}(\xi ).
\end{equation*}%
Then the symbolic calculus (\ref{2.7}) implies that, for $0<2s^{\prime }<1$,
we have
\begin{equation}
\Vert G_{\delta }\,(v\,F)\Vert _{H^{\alpha }({{\mathbb{R}}}_{v})}\leq C\Vert
G_{\delta }\,F\Vert _{H_{1}^{\alpha }({{\mathbb{R}}}_{v})}\,  \label{2.13}
\end{equation}%
for any $\alpha \geq 0$, then
\begin{equation}
|(I)|\leq C\left\{ \Vert f\Vert _{L_{1}^{1}({{\mathbb{R}}}_{v})}+\Vert
G_{\delta }\,f\Vert _{L_{1}^{2}({{\mathbb{R}}}_{v})}\right\} \Vert G_{\delta
}\,g\Vert _{H_{1}^{s^{\prime }}({{\mathbb{R}}}_{v})}\,\Vert h\Vert
_{H^{s^{\prime }}({{\mathbb{R}}}_{v})}  \label{2.14}
\end{equation}%
On the other hand, for the term $(II)$, we have
\begin{equation*}
\begin{split}
& \partial _{\xi }\big(G_{\delta }(\xi )\hat{g}(\xi \,\cos {\theta })\big)-%
\big(\partial _{\xi }\,(G_{\delta }\,\hat{g})\big)(\xi \,\cos {\theta })=%
\big\{G_{\delta }(\xi )-G_{\delta }(\xi \,\cos {\theta })\big\}\big(\partial
_{\xi }\hat{g}\big)(\xi \,\cos {\theta }) \\
& +G_{\delta }(\xi )\big(\cos \theta -1\big)(\partial _{\xi }\hat{g})(\xi
\,\cos {\theta })+\big\{(\partial _{\xi }G_{\delta })(\xi )-(\partial _{\xi
}G_{\delta })(\xi \,\cos {\theta })\big\}\,\,\hat{g}(\xi \,\cos {\theta }) \\
& =A_{1}+A_{2}+A_{3}.
\end{split}%
\end{equation*}%
Thus
\begin{equation*}
|(II)|\leq C\int_{{{\mathbb{R}}}_{\xi }}\int_{-\frac{\pi }{2}}^{\frac{\pi }{2%
}}\beta (\theta )|\hat{f}(\xi \,\sin {\theta })|\big|A_{1}+A_{2}+A_{3}\big|%
\,\,\big|\hat{h}(\xi )\big|\,\,d\xi \,d\theta \,.
\end{equation*}%
We study now the above 3 terms on the right-hand side. By using (\ref{2.9}),
\begin{equation*}
\begin{split}
& \int_{{{\mathbb{R}}}_{\xi }}\int_{-\frac{\pi }{2}}^{\frac{\pi }{2}}\beta
(\theta )|\hat{f}(\xi \,\sin {\theta })|\,|A_{1}|\,\,\big|\hat{h}(\xi )\big|%
\,\,d\xi \,d\theta \\
& \leq \int_{{{\mathbb{R}}}_{\xi }}\int_{-\frac{\pi }{2}}^{\frac{\pi }{2}%
}\beta (\theta )\,\sin ^{2}(\theta /2)\,\,|G_{\delta }(\xi \,\sin {\theta })%
\hat{f}(\xi \,\sin {\theta })| \\
& \hskip2cm\times \,\,\big|G_{\delta }(\xi \,\cos {\theta })\,(\partial
_{\xi }\hat{g})(\xi \,\cos {\theta })\big|\,\,\langle \xi \rangle
^{2s^{\prime }}\,\big|\hat{h}(\xi )\big|\,\,d\xi \,d\theta \\
& \leq C\Vert G_{\delta }\,{f}\Vert _{L^{1}({{\mathbb{R}}}_{v})}\int_{-\frac{%
\pi }{2}}^{\frac{\pi }{2}}\beta (\theta )\frac{\sin ^{2}(\theta /2)}{|\cos
\theta |^{1/2}}\,d\theta \,\Vert G_{\delta }\,(v\,g)\Vert _{H^{s^{\prime }}({%
{\mathbb{R}}}_{v})}\,\Vert h\Vert _{H^{s^{\prime }}({{\mathbb{R}}}_{v})} \\
& \leq C\Vert G_{\delta }\,f\Vert _{L_{1}^{2}({{\mathbb{R}}}_{v})}\Vert
G_{\delta }\,(v\,g)\Vert _{H^{s^{\prime }}({{\mathbb{R}}}_{v})}\,\Vert
h\Vert _{H^{s^{\prime }}({{\mathbb{R}}}_{v})}\,.
\end{split}%
\end{equation*}%
The estimate (\ref{2.11}) and $\cos \theta -1=-2\sin ^{2}(\theta /2)$ imply
\begin{equation*}
\begin{split}
& \int_{{{\mathbb{R}}}_{\xi }}\int_{-\frac{\pi }{2}}^{\frac{\pi }{2}}\beta
(\theta )|\hat{f}(\xi \,\sin {\theta })|\,|A_{2}|\,\,\big|\hat{h}(\xi )\big|%
\,\,d\xi \,d\theta \\
& \leq C\int_{{{\mathbb{R}}}_{\xi }}\int_{-\frac{\pi }{2}}^{\frac{\pi }{2}%
}\beta (\theta )\,\sin ^{2}(\theta /2)\,\,|G_{\delta }(\xi \,\sin {\theta })%
\hat{f}(\xi \,\sin {\theta })| \\
& \hskip3cm\times \,\,\big|G_{\delta }(\xi \,\cos {\theta })\,(\partial
_{\xi }\hat{g})(\xi \,\cos {\theta })\big|\,\big|\hat{h}(\xi )\big|\,\,d\xi
\,d\theta \\
& \leq C\Vert G_{\delta }\,f\Vert _{L_{1}^{2}({{\mathbb{R}}}_{v})}\Vert
G_{\delta }\,(v\,g)\Vert _{L^{2}({{\mathbb{R}}}_{v})}\,\Vert h\Vert _{L^{2}({%
{\mathbb{R}}}_{v})}\,.
\end{split}%
\end{equation*}%
Finally, the hypothesis $0<s<1/2$ implies $(4s^{\prime }-1)^{+}<2s^{\prime }$%
, then (\ref{2.10}) yields,
\begin{equation*}
\begin{split}
& \int_{{{\mathbb{R}}}_{\xi }}\int_{-\frac{\pi }{2}}^{\frac{\pi }{2}}\beta
(\theta )|\hat{f}(\xi \,\sin {\theta })|\,|A_{3}|\,\,\big|\hat{h}(\xi )\big|%
\,\,d\xi \,d\theta \\
& \leq C\int_{{{\mathbb{R}}}_{\xi }}\int_{-\frac{\pi }{2}}^{\frac{\pi }{2}%
}\beta (\theta )\,\sin ^{2}(\theta /2)\,\,|G_{\delta }(\xi \,\sin {\theta })%
\hat{f}(\xi \,\sin {\theta })| \\
& \hskip3cm\times \,\,\,\langle \xi \rangle ^{2s^{\prime }}\,\,\big|%
G_{\delta }(\xi \,\cos {\theta })\,\hat{g}(\xi \,\cos {\theta })\big|\,\,%
\big|\hat{h}\big|\,\,d\xi \,d\theta \\
& \leq C\Vert G_{\delta }\,f\Vert _{L_{1}^{2}({{\mathbb{R}}}_{v})}\Vert
G_{\delta }\,g\Vert _{H^{s^{\prime }}({{\mathbb{R}}}_{v})}\,\Vert h\Vert
_{H^{s^{\prime }}({{\mathbb{R}}}_{v})}\,.
\end{split}%
\end{equation*}%
By summing the above 3 estimates, (\ref{2.13}) implies that
\begin{equation}
|(II)|\leq C\Vert G_{\delta }\,f\Vert _{L_{1}^{2}({{\mathbb{R}}}_{v})}\Vert
G_{\delta }\,g\Vert _{H_{1}^{s^{\prime }}({{\mathbb{R}}}_{v})}\,\Vert h\Vert
_{H^{s^{\prime }}({{\mathbb{R}}}_{v})}\,.  \label{2.13+b}
\end{equation}%
Proof of Proposition \ref{prop2.1} is established.
\end{proof}

\begin{remark}
In the proof of estimate for the term $I_1$ and the last term of $(II)$, we
have used crucially the restrict assumption $0<s<1/2$.
\end{remark}

\vskip0.5cm

\section{Sobolev regularizing effect of weak solutions}

\label{section3} \smallbreak

We will first give an  $H^{+\infty }$-regularizing effect results
for Kac's
equation. The following Theorem is more precise than Theorem 1.1 of \cite%
{MUXY-DCDS} where the homogeneous Boltzmann equation with Maxwellian
molecules has been studied.

\begin{theorem}
\label{theo2.1} Assume that the initial datum $f_{0}\in
L_{2+2s}^{1}\cap L\log L({{\mathbb{R}}})$, and the cross-section
$\beta $  satisfy (\ref{1.1+1} ) with $0<s<\frac 1 2$. If $f\in
L^{\infty }(]0,+\infty \lbrack ;L_{2+2s}^{1}\cap L\log
L({{\mathbb{R}}}))$ is a nonnegative weak solution of the Cauchy
problem (\ref{1.1}), then $f(t,\cdot )\in H_{2}^{+\infty
}({{\mathbb{R}}})$ for any $t>0$.
\end{theorem}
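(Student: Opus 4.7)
The plan is to bootstrap Sobolev regularity by iterating the coercivity estimate of Proposition~\ref{prop2.0}, gaining $s$ derivatives at each step. Precisely, I would prove by induction on $k \in \NN$ that for every $\tau \in (0, T_0]$ there exists a constant $C_{k,\tau}$ such that $\sup_{\tau \leq t \leq T_0} \|f(t,\cdot)\|_{H^{ks}_2(\RR)} \leq C_{k,\tau}$. Since $H^{+\infty}_2 = \bigcap_k H^{ks}_2$, this gives the theorem.

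Before starting the induction, I would upgrade Proposition~\ref{prop2.0} to a coercivity estimate with a constant \emph{uniform in $t \in [0, T_0]$}: there exist $c_0, C > 0$ such that $-\bigl(K(f(t,\cdot), g), g\bigr) \geq c_0 \|g\|_{H^s(\RR)}^2 - C\|g\|_{L^2(\RR)}^2$ for all $t \in [0, T_0]$ and all smooth $g$. By Remark~\ref{rema2.1}, this reduces to uniform versions of (H-1) and (H-2) for $f(t,\cdot)$, which follow respectively from the conservation laws (\ref{1.3})--(\ref{1.3b}) and from the entropy inequality (\ref{1.3c}).

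For the inductive step, I fix $0 < \tau < \tau' \leq T_0$ and pick a time cutoff $\varphi \in C^\infty([0, T_0])$ with $\varphi \equiv 0$ on $[0, \tau]$ and $\varphi \equiv 1$ on $[\tau', T_0]$. Writing $h$ for a regularised version of $\langle v \rangle^2 \langle D\rangle^{ks} f$ obtained via a Friedrichs mollifier $J_\eta = (1+\eta \langle D\rangle)^{-N}$, I would test the equation $\partial_t f = K(f,f)$ against $\varphi^2 \langle v\rangle^2 \langle D\rangle^{ks} h$ to obtain the energy identity
\[
\tfrac{1}{2}\tfrac{d}{dt}\|\varphi h\|_{L^2}^2 - \varphi\varphi' \|h\|_{L^2}^2 = \varphi^2 \bigl(K(f, h), h\bigr) + \varphi^2 R,
\]
where $R$ collects the commutator remainders from moving $\langle v\rangle^2$, $\langle D\rangle^{ks}$ and $J_\eta$ past $K(f, \cdot)$. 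The main term is handled by the uniform coercivity, yielding $-\varphi^2 (K(f,h), h) \geq c_0 \varphi^2 \|h\|_{H^s}^2 - C\varphi^2 \|h\|_{L^2}^2$; this is the crucial $s$-derivative gain. The remainder $R$ is controlled via the Bobylev formula (\ref{2.2}), in the spirit of Lemma~\ref{lemm2.2} (for the $\langle D\rangle^{ks}$ commutator), together with the moment-loss upper bound (\ref{estimate-E}) taken with $\ell = 2$ (for the $\langle v\rangle^2$ commutator); all such contributions turn out to be of order strictly less than $ks + s$ in the weighted $H$-scale, and are therefore either bounded by the inductive hypothesis or absorbed into $\tfrac{c_0}{2}\varphi^2 \|h\|_{H^s}^2$ via Young's inequality. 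A Gronwall argument on $[\tau, T_0]$ followed by the limit $\eta \to 0$ then yields the desired bound on $[\tau', T_0]$ and closes the induction.

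I anticipate two principal obstacles. The first is initiating the induction at $k = 0$: the assumption $f \in L^\infty(L^1_{2+2s} \cap L\log L)$ does not immediately furnish $f \in L^\infty_{\mathrm{loc}}(]0, T_0]; L^2_2)$, so a separate smoothing argument is needed—exploiting the coercivity together with the one-dimensional Sobolev embedding $H^s \hookrightarrow L^p$ for $p = 2/(1-2s)$—to break the first $L^1 \to L^2$ barrier. The second is the precise commutator calculus for $[\langle v\rangle^2 \langle D\rangle^{ks}, K(f, \cdot)]$, where both a polynomial weight in $v$ and a derivative in $v$ must cross the collision operator. By analogy with the proof of Proposition~\ref{prop2.1}, the strategy is to combine the Bobylev formula (which handles $\langle D\rangle^{ks}$) with (\ref{estimate-E}) (which handles $\langle v\rangle^2$ at the cost of $2s$ extra moments on $f$); this is precisely why the hypothesis requires initial moments of order $2 + 2s$ and the restriction $0 < s < 1/2$.
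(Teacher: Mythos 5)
Your overall architecture (an energy estimate built on the uniform-in-$t$ coercivity of Proposition \ref{prop2.0}, with commutators of the collision operator against $\langle v\rangle^{2}$ and a Fourier multiplier controlled via the Bobylev formula as in Proposition \ref{prop2.1}) is indeed the engine of the paper's proof, and your bootstrap from $H^{ks}_{2}$ to $H^{(k+1)s}_{2}$ with a time cutoff and a Friedrichs regularizer is a legitimate alternative packaging of it. But there is a genuine gap exactly at the point you flag and then leave unresolved: the base case $k=0$. The hypotheses give only $f\in L^{\infty}(]0,+\infty[;L^{1}_{2+2s}\cap L\log L)$, so the quantity $\|f(t)\|_{L^{2}_{2}}$ that starts your induction is not known to be finite for any $t$, and your energy identity cannot even be initialized: testing with $\varphi^{2}\langle v\rangle^{2}\langle D\rangle^{ks}h$ requires the right-hand side $\varphi\varphi'\|h\|^{2}_{L^{2}}$ to be finite on the support of $\varphi'$, which is precisely the missing $L^{1}\to L^{2}$ step. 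The hint you give (coercivity plus $H^{s}\hookrightarrow L^{p}$) is only a gesture toward a Nash-type argument; making it work would require justifying the energy inequality for a weak solution with merely $L^{1}$ integrability (admissibility of the mollified test functions, uniform commutator bounds for $[J_{\eta},K(f,\cdot)]$ with the weight $\langle v\rangle^{2}$), and none of this is supplied. Since producing \emph{some} Sobolev regularity from $L^{1}\cap L\log L$ data is the actual content of Theorem \ref{theo2.1} (the subsequent gain of $s$ derivatives per step is comparatively routine), the proposal as written does not prove the theorem.

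It is worth seeing how the paper sidesteps this barrier, because the device is the one idea your plan lacks: instead of iterating, it uses the single time-dependent mollifier $M_{\delta}(t,\xi)=\langle\xi\rangle^{tN-1}(1+\delta|\xi|^{2})^{-N_{0}}$, whose order is $-1$ at $t=0$ and grows linearly in $t$. Testing with $M_{\delta}\langle v\rangle^{4}M_{\delta}f$ and using the analogues of your commutator estimates (Lemma \ref{lemm2+1.1}, Proposition \ref{prop2+1.1}), the time derivative of the symbol produces only a logarithmic loss $\|\log^{1/2}(|D_{v}|)M_{\delta}f\|^{2}_{L^{2}_{2}}$, which is absorbed into the coercive term $c_{f_{0}}\|M_{\delta}f\|^{2}_{H^{s}_{2}}$ by interpolation; Gronwall then closes with the initial value $\|M_{\delta}f(0)\|_{L^{2}_{2}}\leq\|f_{0}\|_{H^{-1}_{2}}\leq\|f_{0}\|_{L^{1}_{2}}$, so no $L^{2}$ information on the data is ever needed, and since $N$ and $T_{0}$ are arbitrary one gets $f(t,\cdot)\in H^{+\infty}_{2}$ in one stroke. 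If you want to keep your iterative scheme, you must either import this negative-initial-order, time-increasing mollifier to produce the base case, or genuinely carry out a Nash/De Giorgi-type $L^{1}\to L^{2}$ smoothing argument at the level of weak solutions; as it stands, that step is missing.
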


\begin{remark}
{\bf 1)} This is a $H^{+\infty }$-smoothing effect results for the
Cauchy problem, it is different from that of
\cite{D95,graham-meleard} where their assumption is that all moments
of the initial datum are bounded.

{\bf 2)} The results of theorem \ref{theo2.1} is also true if we assume the following
Debye-Yukawa type collision kernel :
\begin{equation*}
\beta (\theta )=C_{0}\frac{|\cos \theta |}{|\sin \theta |}\Big(\log |\theta
|^{-1}\Big)^{m},\,\,\quad \,\,0<m.
\end{equation*}
\end{remark}

To prove the Theorem \ref{theo2.1}, we use, as in \cite{MUXY-DCDS},
the mollifier of polynomial type
\begin{equation*}
M_{\delta }(t,\,\xi )=\langle \xi \rangle ^{tN-1}(1+\delta |\xi
|^{2})^{-N_{0}},
\end{equation*}%
for $0<\delta <1,\,t\in \lbrack 0,T_{0}]$ and $2N_{0}=T_{0}N+4$.

The idea is the same as the section 3 of \cite{MUXY-DCDS}, but now
we need to estimate the commutators with weighted $\langle
v\rangle^2$. It is analogous to the computation of preceding section.
We give here only the main points of the proof,
\begin{lemma}
\label{lemm2+1.1} We have that for any $0<\delta<1$ and $%
0\leq t\leq T_0,\, \xi\in{{\mathbb{R}}}$,
\begin{equation*}
\left|\partial_t M_\delta(t, \xi)\right|\leq N\log \big(\langle
\xi\rangle\big)  M_\delta(t, \xi) .
\end{equation*}
For $-\pi/4\leq\theta\leq\pi/4$,
\begin{equation*}
|M_{\delta}(\xi)-M_{\delta}(\xi \cos\theta)|\leq C
\sin^{2}(\theta/2) M_{\delta}(\xi\cos\theta)\,,
\end{equation*}
\begin{equation*}
\left|\big(\partial_\xi M_{\delta}\big)(\xi)-\big(\partial_\xi M_{\delta}%
\big)(\xi \cos\theta)\right|\leq C \sin^{2}(\theta/2)\langle\xi
\rangle^{-1} M_{\delta}(\xi\cos\theta),
\end{equation*}
and
\begin{equation*}
\left|\big(\partial^2_\xi M_{\delta}\big)(\xi)-\big(\partial^2_\xi M_{\delta}%
\big)(\xi \cos\theta)\right|\leq C \sin^{2}(\theta/2)\langle\xi
\rangle^{-2} M_{\delta}(\xi\cos\theta),
\end{equation*}
where the constant $C$ depends on $T_0, N$, but is independents of
 $0<\delta<1$.
\end{lemma}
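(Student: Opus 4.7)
The proof has the same architecture as that of Lemma \ref{lemm2.2}: reduce everything to pointwise bounds on the derivatives of $M_\delta$ and then apply Taylor's formula in $\xi$, being careful that the comparison $M_\delta(\xi_\tau) \approx M_\delta(\xi\cos\theta)$ holds uniformly in $0<\delta<1$.

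The time-derivative estimate is immediate since
\[
\partial_t M_\delta(t,\xi)=\partial_t\bigl(\langle\xi\rangle^{tN-1}\bigr)(1+\delta|\xi|^2)^{-N_0}= N\log\langle\xi\rangle\, M_\delta(t,\xi),
\]
which already gives the claimed bound with constant exactly $N$.

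For the three spatial estimates, my plan is first to compute $\partial_\xi M_\delta$, $\partial_\xi^2 M_\delta$, and $\partial_\xi^3 M_\delta$ by Leibniz. Writing $M_\delta = A_\delta(\xi) B_\delta(\xi)$ with $A_\delta(\xi)=\langle\xi\rangle^{tN-1}$ and $B_\delta(\xi)=(1+\delta|\xi|^2)^{-N_0}$, one checks by induction that
\[
|\partial_\xi^k A_\delta(\xi)|\leq C_{T_0,N}\langle\xi\rangle^{-k}A_\delta(\xi),\qquad |\partial_\xi^k B_\delta(\xi)|\leq C_{N_0}\langle\xi\rangle^{-k}B_\delta(\xi),
\]
for $k=1,2,3$, where the second bound uses $|\delta\xi|(1+\delta|\xi|^2)^{-1}\leq \langle\xi\rangle^{-1}$. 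Hence
\[
|\partial_\xi^k M_\delta(\xi)|\leq C\langle\xi\rangle^{-k}M_\delta(\xi),\qquad k=1,2,3,
\]
with $C$ depending only on $T_0$ and $N$, and in particular independent of $\delta$.

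Next, for $|\theta|\leq\pi/4$ set $\xi_\tau=\xi\cos\theta+\tau(\xi-\xi\cos\theta)$, so that $\tfrac{\sqrt2}{2}|\xi|\leq |\xi_\tau|\leq|\xi|$. This gives $\langle\xi_\tau\rangle\sim\langle\xi\rangle\sim\langle\xi\cos\theta\rangle$ (with constants depending on $T_0N$ in exponentiated form), and since $|\xi\cos\theta|\leq|\xi_\tau|\leq|\xi|\leq\sqrt2\,|\xi\cos\theta|$, we also have $1+\delta|\xi_\tau|^2\leq 2(1+\delta|\xi\cos\theta|^2)$. Combining these yields the key uniform comparison
\[
M_\delta(\xi_\tau)\leq C\, M_\delta(\xi\cos\theta),\qquad 0\leq\tau\leq 1,\ |\theta|\leq\pi/4,
\]
with $C$ independent of $\delta$. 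Then applying Taylor's formula together with $\xi-\xi\cos\theta=2\xi\sin^2(\theta/2)$ to $M_\delta$, $\partial_\xi M_\delta$, and $\partial_\xi^2 M_\delta$ respectively, and substituting the derivative bounds above, produces exactly the three claimed inequalities: each application costs one power of $|\xi|$ from the Taylor factor against $\langle\xi\rangle^{-k}$ from the derivative bound, so one ends up with $\sin^2(\theta/2)$ times $\langle\xi\rangle^{0}$, $\langle\xi\rangle^{-1}$, $\langle\xi\rangle^{-2}$ respectively, as required.

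The only real obstacle is bookkeeping: verifying that the constants in the derivative bounds genuinely do not depend on $\delta$ (the factor $B_\delta$ is delicate because differentiation brings down powers of $\delta\xi$, which must be reabsorbed into $\langle\xi\rangle^{-1}B_\delta$), and checking that the comparison $M_\delta(\xi_\tau)\leq C M_\delta(\xi\cos\theta)$ holds with $\delta$-independent $C$. Once those are in hand, the four estimates follow by the same Taylor argument used in Lemma \ref{lemm2.2}, with polynomial bounds in place of the exponential ones, so I will not repeat the routine computation.
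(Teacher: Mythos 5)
Your proposal is correct and follows essentially the same route as the paper: Taylor's formula in $\xi$, combined with the $\delta$-uniform derivative bounds $|\partial_\xi^k M_\delta(\xi)|\leq C_k\langle\xi\rangle^{-k}M_\delta(\xi)$ and the comparison $M_\delta(\xi_\tau)\leq C\,M_\delta(\xi\cos\theta)$ for $|\theta|\leq\pi/4$, which is exactly the paper's substitute (\ref{2+1.11}) for (\ref{2.11}). One trivial remark: for the factor $(1+\delta\xi^2)^{-N_0}$ the inequality actually used in the comparison is the monotonicity $1+\delta\xi_\tau^2\geq 1+\delta(\xi\cos\theta)^2$ rather than the reverse bound you cite, but since the two quantities are comparable with $\delta$-independent constants this does not affect the argument.
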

We prove also this Lemma by using the Taylor formula, and for
any $k\in {\mathbb{N}}$,
$$
\left|\partial^k_\xi M_{\delta}(\xi)\right|\leq C_k
\langle\xi \rangle^{-k} M_{\delta}(\xi),\quad \xi\in{{\mathbb{R}}}
$$
with $C_k$ depends on $T_0, N$, but is independents of
$0<\delta<1$. Moreover, for the polynomial mollifier, we can
substitute the inequality (\ref{2.11}) by the following inequality,
\begin{equation}\label{2+1.11}
M_{\delta}(\xi)\leq C M_{\delta}(\xi\cos\theta)\,,\quad
-\frac{\pi}{4}\leq\theta\leq\frac{\pi}{4},
\end{equation}
here again $C$ depending on  $N_0, T$, and independents of
  $\delta>0$. We have therefore
\begin{proposition}
\label{prop2+1.1} Assume that $0<s<1/2$, we have that
\begin{equation}\label{2+1.12}
\begin{split}
&\left|\Big(\big(v\,M_\delta\big)\,K(f, \, g),\,\, h\Big)- \Big(K(f,
\,
(v\, M_\delta)\,g),\,\, h\Big)\right| \\
&\leq C\,\|f\|_{L^1_1({{\mathbb{R}}})} \,\|{M_\delta}\,g\|_{L^{2}_1({{\mathbb{R}}}%
)}\|h\|_{L^{2}({{\mathbb{R}}})},
\end{split}%
\end{equation}
and
\begin{equation}\label{2+1.13}
\begin{split}
&\left|\Big(\big(\langle v\rangle^2\,M_\delta\big)\,K(f, \, g),\,\,
h\Big)- \Big(K(f, \,
(\langle v\rangle^2\, M_\delta)\,g),\,\, h\Big)\right| \\
&\leq C\,\|f\|_{L^1_2({{\mathbb{R}}})} \,\|{M_\delta}\,g\|_{L^{2}_2({{\mathbb{R}}}%
)}\|h\|_{L^{2}({{\mathbb{R}}})},
\end{split}%
\end{equation}
\end{proposition}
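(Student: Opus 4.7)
The plan is to adapt the Fourier-side argument of Proposition \ref{prop2.1} to the polynomial mollifier $M_\delta$. The essential inputs are the commutator bounds of Lemma \ref{lemm2+1.1} for $\partial_\xi^k M_\delta$ with $k=0,1,2$, the pointwise comparison \eqref{2+1.11}, and the non-cutoff integrability $\int_{-\pi/2}^{\pi/2}\beta(\theta)\sin^2(\theta/2)\,d\theta<+\infty$ from \eqref{1.1+2}. The key simplification compared with Proposition \ref{prop2.1} is that the Lemma \ref{lemm2+1.1} commutator bounds carry negative powers $\langle\xi\rangle^{-k}$ rather than the positive $\langle\xi\rangle^{2s'}$ that appeared for the exponential mollifier $G_\delta$, which is exactly why the estimates here end with an unweighted $\|h\|_{L^2}$ rather than $\|h\|_{H^{s'}}$.

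To prove \eqref{2+1.12}, I would expand $(v\,M_\delta)K(f,g)-K(f,(v\,M_\delta)g)$ using Bobylev \eqref{2.2}, Plancherel, and the identity
$$
\mathcal{F}\bigl((v\,M_\delta)F\bigr)(\xi)=i(\partial_\xi M_\delta)(\xi)\hat F(\xi)+iM_\delta(\xi)(\partial_\xi\hat F)(\xi).
$$
Exactly as in the proof of Proposition \ref{prop2.1}, the difference splits into a piece $(I)$ where the $\xi$-derivative lands on $\hat f(\xi\sin\theta)$ (producing a factor $\sin\theta$) and a piece $(II)$ comparing $\partial_\xi\bigl(M_\delta(\xi)\hat g(\xi\cos\theta)\bigr)$ with $\bigl(\partial_\xi(M_\delta\hat g)\bigr)(\xi\cos\theta)$. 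In $(II)$, the Lemma \ref{lemm2+1.1} bounds together with $\cos\theta-1=-2\sin^2(\theta/2)$ supply a factor $\sin^2(\theta/2)$ in each summand, while the $\langle\xi\rangle^{-1}$ decay of $\partial_\xi M_\delta$ absorbs the single extra power of $\xi$ coming from the $v$-weight. After the changes of variable $\xi\sin\theta\mapsto\xi$ or $\xi\cos\theta\mapsto\xi$ (whose Jacobians are soaked up by the corresponding $|\sin\theta|$, $|\cos\theta|$ factors in $\beta$) and an application of \eqref{2+1.11}, the desired bound $\|f\|_{L^1_1}\,\|M_\delta g\|_{L^2_1}\,\|h\|_{L^2}$ emerges.

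For \eqref{2+1.13}, I would write $\langle v\rangle^2=1+v^2$; the $1$-part is the commutator already subsumed by (the polynomial analogue of) Proposition \ref{prop2.1}'s argument. For the $v^2$-part, the Fourier identity
$$
\mathcal{F}\bigl((v^2 M_\delta)F\bigr)(\xi)=-\partial_\xi^2\bigl(M_\delta(\xi)\hat F(\xi)\bigr)
$$
combined with Bobylev reduces matters to controlling
$$
R(\xi,\theta):=\partial_\xi^2\bigl(M_\delta(\xi)\hat g(\xi\cos\theta)\bigr)-\bigl(\partial_\xi^2(M_\delta\hat g)\bigr)(\xi\cos\theta).
$$
A Leibniz expansion produces finitely many summands, each carrying either a factor $\partial_\xi^k M_\delta(\xi)-\partial_\xi^k M_\delta(\xi\cos\theta)$ for some $k\in\{0,1,2\}$, or a factor $\cos\theta-1=-2\sin^2(\theta/2)$ coming from differentiating $\hat g(\xi\cos\theta)$ and comparing with $(\partial_\xi\hat g)(\xi\cos\theta)$. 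Every summand therefore inherits the decisive $\sin^2(\theta/2)$, while the residual $\partial_\xi^j\hat g$-factors translate on the velocity side into at most two $v$-weights on $g$, yielding $\|M_\delta g\|_{L^2_2}$; the factor $\hat f(\xi\sin\theta)$ is then paired against $\|f\|_{L^1_2}$.

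The main technical obstacle is the bookkeeping of the second-order Leibniz summands in \eqref{2+1.13}: in each summand one must check that the $\langle\xi\rangle^{-k}$ decay supplied by Lemma \ref{lemm2+1.1} exactly compensates the $\langle\xi\rangle^k$ growth produced by the $\langle v\rangle^2$-weight, so that no regularity on $g$ beyond $L^2_2$ and no $H^{s'}$-norm on $h$ are needed. As in Proposition \ref{prop2.1}, the hypothesis $0<s<1/2$ is used to secure the $\theta$-integrability of the $|\sin\theta|\beta(\theta)$-type term arising in the $(I)$-type piece where the $\xi$-derivative falls on $\hat f(\xi\sin\theta)$.
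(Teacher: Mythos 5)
Your proposal follows essentially the same route as the paper's proof: for \eqref{2+1.12} the Bobylev--Plancherel commutator argument of Proposition \ref{prop2.1} with Lemma \ref{lemm2.2} and \eqref{2.11} replaced by Lemma \ref{lemm2+1.1} and \eqref{2+1.11}, and for \eqref{2+1.13} the identity $\mathcal{F}\big((v^2M_\delta)F\big)(\xi)=-\partial_\xi^2\big(M_\delta(\xi)\hat F(\xi)\big)$ leading to the same three groups of terms (both derivatives on $\hat f$, one on each factor, and the second-order Leibniz remainder), with $0<s<1/2$ used exactly where the paper uses it, namely for the $|\sin\theta|\,\beta(\theta)$ term. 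Your bookkeeping of the remainder $R(\xi,\theta)$ is in fact a bit more explicit than the paper, which omits the computation of its term $B_3$.
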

The proof of (\ref{2+1.12})  is similar to (\ref{2.12}) where we
substitute Lemma \ref{lemm2.2} by Lemma \ref{lemm2+1.1}, and replace
(\ref{2.11}) by (\ref{2+1.11}). Consider now the estimate
(\ref{2+1.13}), we have, as in the proof of the proposition
\ref{prop2.1},
\begin{equation*}
\begin{split}
& (2\pi )^{1/2}\left\{ \Big((v^2\,M_{\delta })\,K(f,\,g),\,\,h\Big)-\Big(%
K(f,\,(v^2\,M_{\delta })\,g),\,\,h\Big)\right\} \\
=& -\int_{{{\mathbb{R}}}_{\xi }}\int_{-\frac{\pi }{2}}^{\frac{\pi
}{2}}\beta
(\theta )\,\sin^2 \theta \,(\partial^2_{\xi }\hat{f})(\xi \,\sin {\theta }%
)M_{\delta }(\xi )\hat{g}(\xi \,\cos {\theta })\,\,\overline{\hat{h}(\xi )}%
\,\,d\xi \,d\theta
\\
-& 2\int_{{{\mathbb{R}}}_{\xi }}\int_{-\frac{\pi }{2}}^{\frac{\pi
}{2}}\beta
(\theta )\,\sin \theta \,(\partial_{\xi }\hat{f})(\xi \,\sin {\theta }%
)\left(\partial_\xi \big(M_{\delta }(\xi )\hat{g}(\xi \,\cos {\theta })
\big)\right)\,\,\overline{\hat{h}(\xi )}%
\,\,d\xi \,d\theta\\
-& \int_{{{\mathbb{R}}}_{\xi }}\int_{-\frac{\pi }{2}}^{\frac{\pi
}{2}}\beta (\theta )\hat{f}(\xi \,\sin {\theta })\big\{\partial^2
_{\xi }\big(M_{\delta }(\xi )\hat{g}(\xi \,\cos {\theta
})\big)-\big(\partial^2_{\xi }\,(M_{\delta }\,\hat{g})\big)(\xi
\,\cos {\theta })\big\}\,\,\overline{\hat{h}(\xi )}
\,\,d\xi \,d\theta \, \\
=& \,\,B_1+B_2+B_3.
\end{split}
\end{equation*}
Then
$$
|B_1|\leq C\,\|\partial^2_{\xi }\hat{f}\|_{L^\infty({{\mathbb{R}}})}
\,\|{M_\delta}\,g\|_{L^{2}({{\mathbb{R}}}
)}\|h\|_{L^{2}({{\mathbb{R}}})}\leq C\,\|f\|_{L^1_2({{\mathbb{R}}})}
\,\|{M_\delta}\,g\|_{L^{2}({{\mathbb{R}}}
)}\|h\|_{L^{2}({{\mathbb{R}}})},
$$
and for $0<2s<1$,
$$
|B_2|\leq C\,\|f\|_{L^1_1({{\mathbb{R}}})} \,\left(\|{M_\delta}\,g\|_{L^{2}({{\mathbb{R}}}
)}+\|{M_\delta}\,(v\, g)\|_{L^{2}({{\mathbb{R}}}
)}\right)\|h\|_{L^{2}({{\mathbb{R}}})}.
$$
The term $B_3$ is evidently more complicate, but the idea is the same,
we omit here their computations.

\bigbreak
Using the continuous embedding
\begin{equation*}
L_{\ell }^{1}({{\mathbb{R}}})\subset H_{\ell }^{-1}({{\mathbb{R}}}),
\end{equation*}%
the upper bounded (\ref{estimate-E}) with $m=-2,
\ell=2$ and $0<2s<1$ imply ,
\begin{equation*}
\Vert K(g,\,h)\Vert _{H_{2}^{-2}({{\mathbb{R}}}_{v})}\leq C\Vert
g\Vert _{L_{2+2s}^{1}({{\mathbb{R}}}_{v})}\Vert h\Vert _{H_{2
+2s}^{-2+2s}({{\mathbb{R}}}_{v})}\leq  C\Vert g\Vert_{L_{2+2s}^{1}({{\mathbb{R}}}_{v})}\,
\Vert h\Vert_{L_{2+2s}^{1}({{\mathbb{R}}}_{v})}\,.
\end{equation*}%
Let $f\in L^{\infty }(]0,+\infty \lbrack
;L_{2+2s}^{1}({{\mathbb{R}}}))$ be a weak solution of the Cauchy
problem (\ref{1.1}), then we can take
\begin{equation*}
f_1=M_{\delta }(t,D_{v})\langle v\rangle ^{4}M_{\delta
}(t,D_{v})f\in L^{\infty
}([0,T_{0}];H_{-2+2s}^{5}({{\mathbb{R}}}))\, ,
\end{equation*}%
as test functions of the Cauchy problem (\ref{1.1}). By using
similar manipulations as in \cite{MUXY-DCDS}, we can obtain the
regularity with respect to $t$ variable, to simplify the notations
we suppose that $f_1\in
C^{1}([0,T_{0}];H_{-2+2s}^{5}({{\mathbb{R}}}))$. We have
$$
\Big(\partial _{t}f(t,\,\cdot \,),\,\,f_1(t,\,\cdot
\,)\Big)_{L^{2}({{
\mathbb{R}}}_{v})}=\Big(K(f\,,\,\,f),\,\,f_1\Big)_{L^{2}({{\mathbb{R}
}}_{v})}.
$$
Then Lemma \ref{lemm2+1.1}, Proposition \ref{prop2+1.1}, the
coercivity estimate (\ref{2.1}) and the conservations (\ref{1.3}),
(\ref{1.3b}), (\ref{1.3c}) imply that
\begin{equation*}
\begin{split}
& \frac{d}{dt}\Vert M_\delta f(t)\Vert^2_{L_2^2({\mathbb{R}}
_{v})}+c_{f_0}\Vert M_\delta f(t)\Vert^2_{H_2^s({\mathbb{R}}
_v)} \\
\leq & C_{f_0}\Vert \log^{1/2} (|D_v|) M_\delta f(t)\Vert^2
_{L_2^2({\mathbb{R}}_v)}+C\,\Vert f_0\Vert _{L_2^1({\mathbb{R}}
)}\,\Vert M_\delta\,f(t)\Vert^2_{L_2^2({\mathbb{R}})}.
\end{split}
\end{equation*}
We now use the following interpolation inequality, for any small
$\varepsilon >0$
\begin{equation}
\Vert \log^{1/2} (|D_v|) M_\delta f(t)\Vert^2
_{L_2^2({\mathbb{R}}_v)}\leq \varepsilon  \Vert M_\delta
f(t)\Vert^2_{H_2^s({\mathbb{R}} _v)}+C_\varepsilon \Vert M_\delta
f(t)\Vert^2_{L_2^2({\mathbb{R}} _v)}.
\end{equation}
Then for $t\in [0, T_0]$,
\begin{equation*}
\frac{d}{dt}\Vert M_\delta f(t)\Vert^2_{L_2^2({\mathbb{R}}
_{v})}\leq C_1\,\Vert M_\delta\,f(t)\Vert^2_{L_2^2({\mathbb{R}})}
\end{equation*}
where $C_1$ depends on $T_0, N$, but  independents of 
$0<\delta<1$. So that for $t\in [0, T_0]$,
\begin{equation*}
\Vert M_\delta f(t)\Vert_{L_2^2({\mathbb{R}} _{v})}\leq
e^{C_1\,t}\,\Vert M_\delta\,f(0)\Vert_{L_2^2({\mathbb{R}})} \leq
e^{C_1\,t}\,\Vert f_0\Vert_{H_2^{-1}({\mathbb{R}})} \leq
e^{C_1\,t}\,\Vert f_0\Vert_{L_2^{1}({\mathbb{R}})}.
\end{equation*}
We have therefore proved for $t\in [0, T_0]$,
$$
(1+|D_v|^2)^{tN-1} f(t, \,\cdot\,)\in L^2_2({\mathbb{R}}).
$$
Since we can choose arbitrary $N>0$ and $T_0>0$, we have proved
Theorem \ref{theo2.1}.

\vskip0.5cm

\section{Gevrey regularizing effect of solutions}

\label{section3+1} \smallbreak

\smallbreak Theorem \ref{theo2.1} implies that the weak solution of the
Cauchy problem (\ref{1.1}) satisfies $f\in L^{\infty
}([t_{0},\,T_{0}[;H_{2}^{1}({{\mathbb{R}}}))$ for any $t_{0}>0$. Then $f$ is
a solution of the following Cauchy problem :
\begin{equation*}
\left\{
\begin{array}{ll}
\frac{\partial f}{\partial t}=K(f,f), & v\in {{\mathbb{R}}},\,\,t>t_{0}, \\
f|_{t=t_{0}}=f(t_{0},\,\cdot \,)\in H_{2}^{1}({{\mathbb{R}}}). &
\end{array}%
\right.
\end{equation*}

We now study the local Gevrey  regularizing effect of {  the} Cauchy
problem, and
suppose that the initial datum is $f_{0}\in H_{2}^{1}\cap L_{2}^{1}({{%
\mathbb{R}}})$. We state this result as the:

\begin{theorem}
\label{theo3.1} Assume that the initial datum $f_{0}\in H_{2}^{1}\cap
L_{2}^{1}({{\mathbb{R}}})$, and the cross-section $\beta $ satisfy (\ref%
{1.1+1}) with $0<s<\frac{1}{2}$. For $T_{0}>0$, if $f\in L^{\infty
}([0,T_{0}];H_{2}^{1}\cap L_{2}^{1}({{\mathbb{R}}}))$ is a
nonnegative weak solution of the Cauchy problem (\ref{1.1}), then
for any $0<s^{\prime }<s$, there exists $0<T_{\ast }\leq T_{0}$ such
that $f(t,\cdot )\in G^{^{\frac{1}{2s^{\prime }}}}({{\mathbb{R}}})$
for any $0<t\leq T_{\ast }$. More precisely, there exists $c_0>0$,
\begin{equation*}
e^{c_0 t\langle D_{v}\rangle ^{2s^{\prime }}}f\in L^{\infty
}([0,T_{\ast }];\,L^{2}_1({{\mathbb{R}}})).
\end{equation*}
\end{theorem}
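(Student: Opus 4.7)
The plan is to derive uniform-in-$\delta$ energy estimates for $u_\delta:=G_\delta f$ in $L^2_1(\mathbb{R})$ on a short time interval $[0,T_*]$, and then pass to the limit $\delta\to 0^+$ using (2.5) and the Fourier-multiplier characterization of $G^{1/(2s')}$. Applying $G_\delta$ to the Kac equation and differentiating the $L^2_1$-norm, one obtains
\begin{equation*}
\tfrac12\tfrac{d}{dt}\|u_\delta\|_{L^2_1}^2 \;=\; \bigl((\partial_t G_\delta)f,\;\langle v\rangle^2 u_\delta\bigr) \;+\; \bigl(G_\delta K(f,f),\;\langle v\rangle^2 u_\delta\bigr).
\end{equation*}

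For the collision term, I would use the commutator estimates (2.12+0) and (2.12) of Proposition \ref{prop2.1} to move the Fourier multiplier $G_\delta$ and the weight $\langle v\rangle^2$ (split as $1+v^2$) through $K$. Up to remainders controlled by $C\|u_\delta\|_{L^2_1}\,\|u_\delta\|_{H^{s'}_1}^2$, this produces the symmetric quantity $(K(f,\langle v\rangle u_\delta),\langle v\rangle u_\delta)$ to which the coercivity estimate (\ref{2.1}) applies with $g=\langle v\rangle u_\delta$, giving the lower bound $c_f\|u_\delta\|_{H^s_1}^2-C\|f\|_{L^1}\,\|u_\delta\|_{L^2_1}^2$. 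Crucially, Remark \ref{rema2.1} ensures that the coercivity constant $c_f$ stays uniform in $t$ by virtue of the conservation of energy and the entropy inequality. For the time-derivative term, Lemma \ref{lemm2.1} furnishes the pointwise bound $|\partial_t G_\delta|\le c_0\langle\xi\rangle^{2s'}G_\delta$, so Plancherel combined with a light commutator of $G_\delta$ with $\langle v\rangle^2$ yields the estimate $Cc_0\|u_\delta\|_{H^{s'}_1}^2$.

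Since $s'<s$, the standard interpolation $\|u_\delta\|_{H^{s'}_1}^2\le\eta\|u_\delta\|_{H^{s}_1}^2+C_\eta\|u_\delta\|_{L^2_1}^2$ (any $\eta>0$) absorbs both the time-derivative term and the commutator remainders — the latter after Young's inequality — into the coercive $c_f\|u_\delta\|_{H^{s}_1}^2$, at the cost of extra terms of order $\|u_\delta\|_{L^2_1}^2$ and $\|u_\delta\|_{L^2_1}^4$. One arrives at the $\delta$-uniform Riccati-type inequality
\begin{equation*}
\tfrac{d}{dt}\|u_\delta\|_{L^2_1}^2 \;\le\; C_1\|u_\delta\|_{L^2_1}^2+C_2\|u_\delta\|_{L^2_1}^4,\qquad \|u_\delta(0)\|_{L^2_1}\le\|f_0\|_{L^2_1},
\end{equation*}
whose solutions stay bounded on some interval $[0,T_*]$ with $T_*>0$ depending only on $\|f_0\|_{L^2_1}$ and the chosen $c_0$. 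Fatou's lemma combined with (\ref{2.5}) then promotes this $\delta$-uniform bound to $e^{c_0 t\langle D_v\rangle^{2s'}}f\in L^\infty([0,T_*];L^2_1(\mathbb{R}))$, which is precisely the Gevrey conclusion.

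The main obstacle I anticipate is the weighted commutator bookkeeping: Proposition \ref{prop2.1} is only stated for weight $v$, and handling $\langle v\rangle^2$ forces one either to iterate those estimates or to derive a direct analogue using the $\partial_\xi^2 G_\delta$ bound of Lemma \ref{lemm2.1} — and this is exactly the place where the restriction $0<s<1/2$, i.e., $(4s'-1)^+<2s'$, becomes indispensable, as flagged in the remark following Proposition \ref{prop2.1}. Once this commutator bookkeeping is in place, the remainder of the argument reduces to routine interpolation and a comparison with a Riccati ODE.
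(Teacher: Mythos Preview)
Your proposal follows essentially the same strategy as the paper: mollify by $G_\delta$, split $\langle v\rangle^2=1+v^2$, use Proposition~\ref{prop2.1} for the commutators, Proposition~\ref{prop2.0} for coercivity, interpolate between $H^{s'}_1$ and $H^s_1$, and close via a super-linear differential inequality on $\|G_\delta f\|_{L^2_1}$. Two small corrections are in order.

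First, your anticipated obstacle is not one: the paper never needs a $\langle v\rangle^2$-weighted commutator. After writing $(G_\delta K(f,f),\langle v\rangle^2 G_\delta f)=(G_\delta K(f,f),G_\delta f)+(vG_\delta K(f,f),vG_\delta f)$, the two estimates (\ref{2.12+0}) and (\ref{2.12}) of Proposition~\ref{prop2.1} apply directly, and coercivity is invoked separately on $g=G_\delta f$ and on $g=vG_\delta f$ (not on $g=\langle v\rangle G_\delta f$ as you write).

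Second, and more substantively, your absorption step as written does not close: with a \emph{fixed} interpolation parameter $\eta$, the commutator remainder $C\|u_\delta\|_{L^2_1}\|u_\delta\|_{H^{s'}_1}^2$ produces a term $C\eta\,\|u_\delta\|_{L^2_1}\|u_\delta\|_{H^s_1}^2$ which still carries the a~priori unbounded factor $\|u_\delta\|_{L^2_1}$ and therefore cannot be absorbed into $c_{f_0}\|u_\delta\|_{H^s_1}^2$. The paper handles this by using the quantitative form $\|u\|_{H^{s'}}^2\le\lambda\|u\|_{H^s}^2+\lambda^{-s'/(s-s')}\|u\|_{L^2}^2$ with $\lambda$ chosen proportional to $\|u_\delta\|_{L^2_1}^{-1}$; this yields the exponent $\frac{s'}{s-s'}+2$ (rather than $4$) on the right-hand side of the differential inequality. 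The resulting ODE is still of Bernoulli type and stays bounded on some $[0,T_*]$, so the conclusion is the same.
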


\begin{remark}\label{rema3.2}
The above Gevrey smoothing effect property of Cauchy problem is for any weak solution $f\in
L^{\infty }([0,T_{0}];H_{2}^{1}\cap L_{2}^{1}({{\mathbb{R}}}))$, so
that we don't need to use the uniqueness of solution for Kac's
equation.
\end{remark}

We prove the above theorem by construction of \emph{a priori}
estimates for the{  mollified weak solution}. Take $f\in L^{\infty
}(]0,T_{0}[;H_{2}^{1}\cap L_{2}^{1}({{\mathbb{R}}}))$ to be a weak
solution of the Cauchy problem (\ref{1.1}), then (\ref{estimate-E})
with $m=\ell =0$ implies that, (recall the assumption $0<s<1/2$)
\begin{equation*}
K(f,\,f)\in L^{\infty }(]0,T_{0}[;L^{2}({{\mathbb{R}}}_{v})).
\end{equation*}%
So that we need to choose a test function $\varphi \in
C^{1}([0,T_{0}];\,L^{2}({{\mathbb{R}}}_{v}))$ to make sense
\begin{equation*}
\big(K(f,\,f),\,\,\varphi \big)_{L^{2}({{\mathbb{R}}}_{v})}.
\end{equation*}%
The right way is to choose  a mollified weak solution $f$, we
first have
\begin{equation*}
\tilde{f}(t,\,\cdot \,)=\Big(G_{\delta }(t,D_{v})\langle v\rangle
^{2}\,G_{\delta }(t,D_{v})f\Big)(t,\,\cdot \,)\in L^{\infty
}(]0,T_{0}[;\,\,H^{1}({{\mathbb{R}}})).
\end{equation*}%
Here again we suppose that $%
\tilde{f}\in C^{1}([0,T_{0}];\,H^{1}({{\mathbb{R}}}_{v}))$, and study the
equation of (\ref{1.1}) in the following weak formulation
\begin{equation}
\Big(\partial _{t}f(t,\,\cdot \,),\,\,\tilde{f}(t,\,\cdot \,)\Big)_{L^{2}({{%
\mathbb{R}}}_{v})}=\Big(K(f\,,\,\,f),\,\,\,\tilde{f}\Big)_{L^{2}({{\mathbb{R}%
}}_{v})}.  \label{3.1}
\end{equation}%
First, the left hand side term is
\begin{equation*}
\begin{split}
\Big(\partial _{t}f(t,\,\cdot \,),& \,\,\tilde{f}(t,\,\cdot \,)\Big)_{L^{2}({%
{\mathbb{R}}}_{v})}=\frac{1}{2}\frac{d}{dt}\Vert G_{\delta }f(t)\Vert
_{L_{1}^{2}({{\mathbb{R}}}_{v})}^{2} \\
& -\Big(\big(\partial _{t}G_{\delta }\big)(t,D_{v})f(t,\,\cdot
\,),\,\,G_{\delta }(t,D_{v})f(t,\,\cdot \,)\Big)_{L^{2}({{\mathbb{R}}}_{v})}
\\
& -\Big(v\,\big(\partial _{t}G_{\delta }\big)(t,D_{v})f(t,\,\cdot
\,),\,\,v\,G_{\delta }(t,D_{v})f(t,\,\cdot \,)\Big)_{L^{2}({{\mathbb{R}}}%
_{v})}.
\end{split}%
\end{equation*}%
Then we estimate the two terms on right hand side by using the following
lemma.

\begin{lemma}
\label{lemm3.1} There exists $C>0$ such that
\begin{equation}  \label{3.2}
\left|\Big(\big(\partial_t G_\delta\big)(t, D_v) f(t, \,\cdot\,),\, \,
G_\delta(t, D_v) f (t, \, \cdot\,)\Big)_{L^2({{\mathbb{R}}}_v)}\right|\leq\,
C \|G_\delta\, f \|^2_{H^{s^{\prime}}({{\mathbb{R}}}_v)},
\end{equation}
and
\begin{equation}  \label{3.3}
\left|\Big( v \,\big(\partial_t G_\delta\big)(t, D_v) f(t, \,\cdot\,),\,\,
v\, G_\delta(t, D_v) f (t, \, \cdot\,)\Big)_{L^2({{\mathbb{R}}}%
_v)}\right|\leq \, C\|G_\delta\, f\|^2_{H^{s^{\prime}}_1({{\mathbb{R}}}_v)}.
\end{equation}
\end{lemma}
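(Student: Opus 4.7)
The plan is to prove both estimates via Plancherel's identity, reducing them to weighted Fourier-side integrals controlled by the pointwise bounds of Lemma~\ref{lemm2.1}. Estimate (\ref{3.2}) is the quick one: since $G_\delta$ and $\partial_t G_\delta$ are commuting, real-valued Fourier multipliers, the inner product equals $\int_{{\mathbb{R}}_\xi}(\partial_t G_\delta)(t,\xi)\,G_\delta(t,\xi)\,|\hat f(\xi)|^2\,d\xi$, and the first bound of Lemma~\ref{lemm2.1}, $|\partial_t G_\delta|\le c_0\langle\xi\rangle^{2s'}G_\delta$, yields $c_0\|G_\delta f\|^2_{H^{s'}}$ immediately.

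For (\ref{3.3}) I would use the correspondence $\mathcal F(vg)=i\partial_\xi \hat g$ to transfer each multiplication by $v$ to a Fourier differentiation, so that
\[
\Bigl(v(\partial_t G_\delta)f,\, v\,G_\delta f\Bigr)_{L^2}
= \int_{{\mathbb{R}}_\xi}\partial_\xi\bigl[(\partial_t G_\delta)\hat f\bigr]\,
\overline{\partial_\xi\bigl[G_\delta\hat f\bigr]}\,d\xi.
\]
The Leibniz rule expands this into four terms containing the factors $\partial_\xi G_\delta$, $\partial_t G_\delta$, $\partial_\xi\partial_t G_\delta$, and $G_\delta$ paired against $\hat f$ and $\partial_\xi\hat f$. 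Three of the four factors are controlled by Lemma~\ref{lemm2.1}; the mixed derivative $\partial_\xi\partial_t G_\delta$ is not stated there, but differentiating (\ref{2.6}) in $\xi$ and reusing the bounds on $\partial_\xi G_\delta$ yields the expected $|\partial_\xi\partial_t G_\delta(t,\xi)|\le C\langle\xi\rangle^{4s'-1}G_\delta$, which is the single new pointwise input needed.

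To assemble the four estimates into the weighted norm on the right-hand side of (\ref{3.3}), note that $\|G_\delta f\|^2_{H^{s'}_1}\sim \|G_\delta f\|^2_{H^{s'}} + \|v\,G_\delta f\|^2_{H^{s'}}$, with $\mathcal F(v\,G_\delta f) = i\partial_\xi(G_\delta\hat f)$. The identity $G_\delta\,\partial_\xi\hat f = \partial_\xi(G_\delta\hat f) - (\partial_\xi G_\delta)\hat f$ lets me rewrite the two terms containing $\partial_\xi\hat f$ in a form that produces $\langle\xi\rangle^{s'}\partial_\xi(G_\delta\hat f)$ after a Cauchy--Schwarz. The main obstacle is the weight bookkeeping: after multiplying the various derivative bounds one meets factors $\langle\xi\rangle^{4s'-1}$ and, in the worst places, $\langle\xi\rangle^{6s'-2}$. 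The hypothesis $s<1/2$ (forcing $s'<1/2$) is precisely what gives $4s'-1\le 2s'$ and $6s'-2\le 2s'$, so every such integrand is absorbed either into $\langle\xi\rangle^{2s'}G_\delta^2|\hat f|^2$ or into $\langle\xi\rangle^{2s'}|\partial_\xi(G_\delta\hat f)|^2$, and the four contributions combine to give exactly the bound by $\|G_\delta f\|^2_{H^{s'}_1}$ claimed in (\ref{3.3}).
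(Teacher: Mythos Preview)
Your argument is correct and follows the same Plancherel--Leibniz route as the paper. The only organizational difference is that the paper uses the exact identity \eqref{2.6} to write $(\partial_t G_\delta)\hat f = c_0\langle\xi\rangle^{2s'}(1+\delta e^{c_0 t\langle\xi\rangle^{2s'}})^{-1}\cdot(G_\delta\hat f)$ and then differentiates this \emph{product} in $\xi$, so that $\partial_\xi(G_\delta\hat f)=\mathcal F(vG_\delta f)/i$ appears directly and only a two-term split is needed (together with the simple bound $|\partial_\xi[\langle\xi\rangle^{2s'}(1+\delta e^{\cdots})^{-1}]|\le C\langle\xi\rangle^{2s'}$). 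Your four-term expansion is equivalent but requires the extra pointwise estimate on $\partial_\xi\partial_t G_\delta$ and the regrouping identity $G_\delta\partial_\xi\hat f=\partial_\xi(G_\delta\hat f)-(\partial_\xi G_\delta)\hat f$; both of these you have handled correctly, and your weight bookkeeping ($4s'-1\le 2s'$, $6s'-2\le 2s'$ under $s'<1/2$) is exactly what is needed.
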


\begin{proof}
(\ref{3.2}) can be deduced directly from (\ref{2.6}) by using the Plancherel
formula.

For (\ref{3.3}), we have
\begin{equation*}
\begin{split}
& \left\vert \Big(v\,\big(\partial _{t}G_{\delta }\big)(t,D_{v})f(t,\,\cdot
\,),\,\,v\,G_{\delta }(t,D_{v})f(t,\,\cdot \,)\Big)_{L^{2}({{\mathbb{R}}}%
_{v})}\right\vert \\
=& C\left\vert \int_{{{\mathbb{R}}}}\left( \partial _{\xi }\Big(c_{0}\langle
\xi \rangle ^{2s^{\prime }}G_{\delta }(t,\xi )\frac{1}{1+\delta
e^{c_{0}t\langle \xi \rangle ^{2s^{\prime }}}}\hat{f}(t,\xi )\Big)\right) \,%
\overline{{\mathcal{F}}\big(v\,G_{\delta }f\big)(t,\,\xi \,)}d\xi \right\vert
\\
\leq & \,C\int_{{{\mathbb{R}}}}\langle \xi \rangle ^{2s^{\prime }}\left\vert
\partial _{\xi }\Big(G_{\delta }(t,\xi )\hat{f}(t,\xi )\Big)\right\vert
\,\left\vert {{\mathcal{F}}\big(v\,G_{\delta }f\big)(t,\,\xi \,)}\right\vert
d\xi \\
& +\,C\int_{{{\mathbb{R}}}}\left\vert \partial _{\xi }\Big(\langle \xi
\rangle ^{2s^{\prime }}\frac{1}{1+\delta e^{c_{0}t\langle \xi \rangle
^{2s^{\prime }}}}\Big)\right\vert \,\,\big|G_{\delta }(t,\xi )\hat{f}(t,\xi )%
\big|\,\left\vert {{\mathcal{F}}\big(v\,G_{\delta }f\big)(t,\,\xi \,)}%
\right\vert d\xi \\
\leq & \,C\Vert G_{\delta }\,f\Vert _{H_{1}^{s^{\prime }}({{\mathbb{R}}}%
_{v})}^{2},
\end{split}%
\end{equation*}%
where we use the fact that
\begin{equation*}
\left\vert \partial _{\xi }\Big(\langle \xi \rangle ^{2s^{\prime }}\frac{1}{%
1+\delta e^{c_{0}t\langle \xi \rangle ^{2s^{\prime }}}}\Big)\right\vert \leq
C\langle \xi \rangle ^{2s^{\prime }}.
\end{equation*}%
Hence Lemma \ref{lemm3.1} is proved
\end{proof}

Then (\ref{3.1}) and Lemma \ref{lemm3.1} give
\begin{equation}
\frac{1}{2}\frac{d}{dt}\Vert G_{\delta }f(t)\Vert _{L_{1}^{2}({{\mathbb{R}}}%
_{v})}^{2}-\Big(K(f\,,\,\,f),\,\,\,\tilde{f}\Big)_{L^{2}({{\mathbb{R}}}%
_{v})}\leq \,C\Vert G_{\delta }\,f\Vert _{H_{1}^{s^{\prime }}({{\mathbb{R}}}%
_{v})}^{2}.  \label{3.1+11}
\end{equation}

On the other hand, we have
\begin{equation*}
\begin{split}
& \Big(K(f,\,\,f\,),\,\,\tilde{f}\Big)_{L^{2}({{\mathbb{R}}}_{v})}=\Big(%
G_{\delta }K(f,\,\,f\,),\,\,(1+v^{2})\,G_{\delta }f\,\Big)_{L^{2}({{\mathbb{R%
}}}_{v})} \\
=& \Big(K(f,\,\,G_{\delta }f\,),\,G_{\delta }f\,\Big)_{L^{2}({{\mathbb{R}}}%
_{v})}+\Big(K(f,\,\,v\,G_{\delta }f\,),\,v\,G_{\delta }f\,\Big)_{L^{2}({{%
\mathbb{R}}}_{v})} \\
& +\Big(G_{\delta }\,K(f,\,\,f\,)-K(f,\,\,G_{\delta }f\,),\,G_{\delta }f\,%
\Big)_{L^{2}({{\mathbb{R}}}_{v})} \\
& +\Big(v\,G_{\delta }\,K(f,\,\,f\,)-K(f,\,\,v\,G_{\delta
}f\,),\,v\,G_{\delta }f\,\Big)_{L^{2}({{\mathbb{R}}}_{v})}.
\end{split}%
\end{equation*}%
Then Proposition \ref{prop2.1} implies
\begin{equation*}
\left\vert \Big(G_{\delta }\,K(f,\,\,f\,)-K(f,\,\,G_{\delta
}f\,),\,G_{\delta }f\,\Big)_{L^{2}({{\mathbb{R}}}_{v})}\right\vert \leq
C\Vert G_{\delta }f\Vert _{L_{1}^{2}({{\mathbb{R}}}_{v})}\Vert G_{\delta
}f\Vert _{H^{s^{\prime }}({{\mathbb{R}}}_{v})}^{2}
\end{equation*}%
and
\begin{equation*}
\begin{split}
& \left\vert \Big(v\,G_{\delta }\,K(f,\,\,f\,)-K(f,\,\,v\,G_{\delta
}f\,),\,v\,G_{\delta }f\,\Big)_{L^{2}({{\mathbb{R}}}_{v})}\right\vert \\
& \hskip2cm\leq C\,\left( \Vert f\Vert _{L_{1}^{1}({{\mathbb{R}}})}+\Vert {%
G_{\delta }}\,f\Vert _{L_{1}^{2}({{\mathbb{R}}})}\right) \,\Vert {G_{\delta }%
}\,f\Vert _{H_{1}^{s^{\prime }}({{\mathbb{R}}})}^{2}.
\end{split}%
\end{equation*}%
The Proposition \ref{prop2.0} implies
\begin{equation*}
-\Big(K(f,\,G_{\delta }f),G_{\delta }f\Big)\geq c_{f}\Vert G_{\delta }f\Vert
_{H^{s}({{\mathbb{R}}}_{v})}^{2}-C\Vert f\Vert _{L^{1}({{\mathbb{R}}}%
_{v})}\Vert G_{\delta }f\Vert _{L^{2}({{\mathbb{R}}}_{v})}^{2},
\end{equation*}%
\begin{equation*}
-\Big(K(f,\,v\,G_{\delta }f),\,\,v\,G_{\delta }f\Big)\geq c_{f}\Vert
v\,G_{\delta }f\Vert _{H^{s}({{\mathbb{R}}}_{v})}^{2}-C\Vert f\Vert _{L^{1}({%
{\mathbb{R}}}_{v})}\Vert v\,G_{\delta }f\Vert _{L^{2}({{\mathbb{R}}}%
_{v})}^{2}.
\end{equation*}%
Since
\begin{equation*}
\Vert G_{\delta }f\Vert _{H_{1}^{s}({{\mathbb{R}}}_{v})}^{2}\leq \Vert
G_{\delta }f\Vert _{H^{s}({{\mathbb{R}}}_{v})}^{2}+\Vert v\,G_{\delta
}f\Vert _{H^{s}({{\mathbb{R}}}_{v})}^{2}+C\Vert G_{\delta }f\Vert
_{L_{1}^{2}({{\mathbb{R}}}_{v})}^{2}
\end{equation*}%
By summing all the above estimates and (\ref{3.1+11}), we obtain
\begin{equation}
\begin{split}
& \frac{d}{dt}\Vert G_{\delta }f(t)\Vert _{L_{1}^{2}({{\mathbb{R}}}%
_{v})}^{2}+c_{f(t)}\Vert G_{\delta }f(t)\Vert _{H_{1}^{s}({{\mathbb{R}}}%
_{v})}^{2} \\
& \leq C\Vert G_{\delta }f(t)\Vert _{H_{1}^{s^{\prime }}({{\mathbb{R}}}%
_{v})}^{2}+C\Vert f(t)\Vert _{L^{1}({{\mathbb{R}}}_{v})}\Vert G_{\delta
}f(t)\Vert _{L_{1}^{2}({{\mathbb{R}}}_{v})}^{2} \\
& +C\,\left( \Vert f(t)\Vert _{L_{1}^{1}({{\mathbb{R}}})}+\Vert {G_{\delta }}%
\,f(t)\Vert _{L_{1}^{2}({{\mathbb{R}}})}\right) \,\Vert {G_{\delta }}%
\,f(t)\Vert _{H_{1}^{s^{\prime }}({{\mathbb{R}}})}^{2}.
\end{split}
\label{3.1+12}
\end{equation}

\bigbreak\noindent \textbf{End of proof of Theorem \ref{theo3.1}}

By using (\ref{1.3}) and (\ref{1.3b}), we have
\begin{equation*}
\Vert f(t)\Vert _{L^{1}({{\mathbb{R}}}_{v})}+\Vert f(t)\Vert _{L_{2}^{1}({{%
\mathbb{R}}}_{v})}\leq C\Vert f_{0}\Vert _{L_{2}^{1}({{\mathbb{R}}}%
_{v})},\quad c_{f(t)}\geq c_{f_{0}}>0.
\end{equation*}%
Then (\ref{3.1+12}) yields
\begin{equation}
\begin{split}
& \frac{d}{dt}\Vert G_{\delta }f(t)\Vert _{L_{1}^{2}({{\mathbb{R}}}%
_{v})}^{2}+c_{f_{0}}\Vert G_{\delta }f(t)\Vert _{H_{1}^{s}({{\mathbb{R}}}%
_{v})}^{2} \\
& \leq C_{f_{0}}\Vert G_{\delta }f(t)\Vert _{H_{1}^{s^{\prime }}({{\mathbb{R}%
}}_{v})}^{2}+C\,\Vert {G_{\delta }}\,f(t)\Vert _{L_{1}^{2}({{\mathbb{R}}}%
)}\,\Vert {G_{\delta }}\,f(t)\Vert _{H_{1}^{s^{\prime }}({{\mathbb{R}}}%
)}^{2}.
\end{split}
\label{3.1+13}
\end{equation}%
We now need the following interpolation inequality, for $0<s^{\prime }<s$
and any $\lambda >0$,
\begin{equation}
\Vert u\Vert _{H^{s^{\prime }}}^{2}\leq \lambda \Vert u\Vert
_{H^{s}}^{2}+\lambda ^{-\frac{s^{\prime }}{s-s^{\prime }}}\Vert u\Vert
_{L^{2}}^{2}.  \label{interpolation}
\end{equation}%
Then for any small $\varepsilon >0$,
\begin{equation*}
C_{f_{0}}\Vert G_{\delta }f(t)\Vert _{H_{1}^{s^{\prime }}({{\mathbb{R}}}%
_{v})}^{2}\leq \varepsilon \Vert G_{\delta }f(t)\Vert _{H_{1}^{s}({{\mathbb{R%
}}}_{v})}^{2}+C_{\varepsilon ,f_{0}}\Vert G_{\delta }f(t)\Vert _{L_{1}^{2}({{%
\mathbb{R}}}_{v})}^{2}
\end{equation*}%
and
\begin{equation*}
C\,\Vert {G_{\delta }}\,f(t)\Vert _{L_{1}^{2}({{\mathbb{R}}})}\Vert
G_{\delta }f(t)\Vert _{H_{1}^{s^{\prime }}({{\mathbb{R}}}_{v})}^{2}\leq
\varepsilon \Vert G_{\delta }f(t)\Vert _{H_{1}^{s}({{\mathbb{R}}}%
_{v})}^{2}+C_{\varepsilon }\Vert {G_{\delta }}\,f(t)\Vert _{L_{1}^{2}({{%
\mathbb{R}}})}^{\frac{s^{\prime }}{s-s^{\prime }}+2}.
\end{equation*}

We finally get from (\ref{3.1+13}),that for any $0<\varepsilon $ and $%
0<s^{\prime }<s$, there exists $C_{\varepsilon }>0$ such that
\begin{equation*}
\begin{split}
& \frac{d}{dt}\Vert G_{\delta }f(t)\Vert _{L_{1}^{2}({{\mathbb{R}}}%
_{v})}^{2}+(c_{f_{0}}-2\varepsilon )\Vert G_{\delta }f\Vert _{H_{1}^{s}({{%
\mathbb{R}}}_{v})}^{2} \\
& \hskip1cm\leq C_{\varepsilon ,f_{0}}\Vert G_{\delta }f(t)\Vert _{L_{1}^{2}(%
{{\mathbb{R}}}_{v})}^{2}+C_{\varepsilon }\Vert {G_{\delta }}\,f(t)\Vert
_{L_{1}^{2}({{\mathbb{R}}})}^{\frac{s^{\prime }}{s-s^{\prime }}+2}.
\end{split}%
\end{equation*}%
We choose $0<2\varepsilon \leq c_{f_{0}}$, we get
\begin{equation}
\frac{d}{dt}\Vert G_{\delta }f(t)\Vert _{L_{1}^{2}({{\mathbb{R}}}_{v})}\leq
C_{1}\Vert G_{\delta }f(t)\Vert _{L_{1}^{2}({{\mathbb{R}}}_{v})}+C_{2}\Vert {%
G_{\delta }}\,f(t)\Vert _{L_{1}^{2}({{\mathbb{R}}})}^{\frac{s^{\prime }}{%
s-s^{\prime }}+1},\quad t\in \lbrack 0,T_{0}],  \label{3.4}
\end{equation}%
with $C_{1},C_{2}>0$ and independent of $\delta >0$. Then
\begin{equation*}
\frac{d}{dt}\Big(e^{-C_{1}t}\Vert G_{\delta }f(t)\Vert _{L_{1}^{2}({{\mathbb{%
R}}}_{v})}\Big)\leq C_{2}e^{\wt{C}_{1}t}\Big(e^{-C_{1}t}\Vert G_{\delta
}f(t)\Vert _{L_{1}^{2}({{\mathbb{R}}}_{v})}\Big)^{\frac{s^{\prime }}{%
s-s^{\prime }}+1}
\end{equation*}%
where $\wt{C}_{1}=\frac{s^{\prime }\,C_{1}}{s-s^{\prime }}$, thus for $t\in
]0,T_{0}]$
\begin{equation*}
\int_{0}^{t}\frac{d}{d\tau }\Big(e^{-C_{1}\tau }\Vert G_{\delta }f(\tau
)\Vert _{L_{1}^{2}({{\mathbb{R}}}_{v})}\Big)^{-\frac{s^{\prime }}{%
s-s^{\prime }}}d\tau \geq \frac{C_{2}}{C_{1}}\Big(1-e^{\wt{C}_{1}t}\Big).
\end{equation*}%
So that, for $0<\delta <1$,
\begin{equation*}
\Vert G_{\delta }f(t)\Vert _{L_{1}^{2}({{\mathbb{R}}}_{v})}\leq \frac{%
\wt{\wt{C}}_{1}\,e^{C_{1}t}\Vert f_{0}\Vert _{L_{1}^{2}({{\mathbb{R}}}_{v})}%
}{\Big(C_{1}+C_{2}\big(1-e^{\wt{C}_{1}t}\big)\Vert f_{0}\Vert _{L_{1}^{2}({{%
\mathbb{R}}}_{v})}^{\frac{s^{\prime }}{s-s^{\prime }}}\Big)^{\frac{%
s-s^{\prime }}{s^{\prime }}}}\,.
\end{equation*}%

We now choose $0<T_{\ast }\leq T_{0}$ small enough so that
\begin{equation*}
{\Big(C_{1}+C_{2}\big(1-e^{\wt{C}_{1}t}\big)\Vert f_{0}\Vert _{L_{1}^{2}({{%
\mathbb{R}}}_{v})}^{\frac{s^{\prime }}{s-s^{\prime }}}\Big)^{\frac{%
s-s^{\prime }}{s^{\prime }}}}\geq C_{3}>0,\quad t\in \lbrack 0,T_{\ast }],
\end{equation*}%
then by compactness and by taking limit $\delta \,\rightarrow \,0$, we have
for $t\in \lbrack 0,T_{\ast }]$,
\begin{equation}
\Vert e^{c_{0}t\langle D_{v}\rangle ^{2s^{\prime }}}f\Vert _{L^{\infty
}(]0,T_{\ast }[;\,\,L_{1}^{2}({{\mathbb{R}}}_{v}))}^{2}\leq e^{C_{1}T_{\ast
}}\frac{\wt{\wt{C}}_{1}}{C_{3}}\Vert f_{0}\Vert _{L_{1}^{2}({{\mathbb{R}}}%
_{v})}^{2}.  \label{3.5}
\end{equation}%
We therefore have proved Theorem \ref{theo3.1}.

\vskip0.5cm

\section{Radially symmetric Boltzmann equations}

\label{section4}

We consider now the Boltzmann collision operators (\ref{1.5}). In
the Maxwellien case, the Bobylev's formula takes the form
\begin{equation}
{\mathcal{F}}\big(Q(g,\,f)\big)(\xi )=\int_{\mathbb{S}^{2}}b\left( \frac{\xi
}{|\xi |}\,\cdot \,\sigma \right) \left\{ \hat{g}(\xi ^{-})\hat{f}(\xi ^{+})-%
\hat{g}(0)\hat{f}(\xi )\right\} d\sigma \,  \label{4.1}
\end{equation}%
where $\xi\in\mathbb{R}^3$,
\begin{equation*}
\xi ^{+}=\frac{\xi +|\xi |\sigma }{2},\,\,\,\,\,\,\xi ^{-}=\frac{\xi -|\xi
|\sigma }{2}.
\end{equation*}%
On the other hand
\begin{equation*}
|\xi ^{+}|^{2}=|\xi |^{2}\frac{1+\frac{\xi }{|\xi |}\cdot \,\sigma }{2}%
\,,\,\,\,\,\,\,|\xi ^{-}|^{2}=|\xi |^{2}\frac{1-\frac{\xi }{|\xi |}\cdot
\,\sigma }{2}\,,
\end{equation*}%
so that if we define $\theta $ by
\begin{equation*}
\cos \theta =\frac{\xi }{|\xi |}\cdot \,\sigma ,
\end{equation*}%
we obtain
\begin{equation*}
|\xi ^{+}|^{2}=|\xi |^{2}\cos ^{2}\left( \frac{\theta }{2}\right)
\,,\,\,\,\,\,\,|\xi ^{-}|^{2}=|\xi |^{2}\sin ^{2}\left( \frac{\theta }{2}%
\right) \,.
\end{equation*}%
We now consider the radially symmetric function with respect to $v\in {{%
\mathbb{R}}}^{3}$,  namely the function satisfy the property
$$
h(v)=h(Av), \quad v\in{\mathbb{R}}^3
$$
for any  proper orthogonal $3\times 3$ matrix $A$, then $h(v)=h(0, 0, |v|)$.
 Denote by $\mathcal{F}_{\mathbb{R}^3}$ the Fourier
transformation in $\mathbb{R}^3$ and $\mathcal{F}_{\mathbb{R}^1}$
the Fourier transformation in $\mathbb{R}^1$. Then $
\mathcal{F}_{\mathbb{R}^3}(h)(\xi)$  is also radially symmetric with
respect to $\xi \in {{\mathbb{R}}}^{3}$, and it is in the form
$$
\mathcal{F}_{\mathbb{R}^3}(h)(\xi)=\mathcal{F}_{\mathbb{R}^3}(h)(0,
0, |\xi|)=\int_{{\mathbb{R}}} e^{-i |\xi| v_3}
\left(\int_{{\mathbb{R}}^2} h(v_1, v_2, v_3)dv_1 dv_2\right) dv_3.
$$
So that
\begin{equation}\label{4.1+1}
\mathcal{F}^{-1}_{\mathbb{R}^1}
\big(\mathcal{F}_{\mathbb{R}^3}(h)(0, 0, \cdot\,)\big)(u)=
\int_{{\mathbb{R}}^2} h(v_1, v_2, u) dv_1 dv_2\,
\end{equation}
is an even function in $\mathbb{R}$, and we have
\begin{lemma}
\label{lemm2} Assume that $h\in L_{k}^{1}({{\mathbb{R}}}^{3}), h\geq
0$ is a radially symmetric function for certain $k\geq 0$, and
uniformly integrable in  $\mathbb{R}^3$, then
$$
\mathcal{F}^{-1}_{\mathbb{R}^1}
\big(\mathcal{F}_{\mathbb{R}^3}(h)(0, 0, \cdot \,)\big)\in
L_{k}^{1}({{\mathbb{R}}})
$$
is a nonnegative even function, and uniformly integrable in
$\mathbb{R}$.
\end{lemma}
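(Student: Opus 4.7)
The plan is to use formula \eqref{4.1+1}, which identifies
$$
\tilde h(u):=\mathcal{F}^{-1}_{\mathbb{R}^1}\bigl(\mathcal{F}_{\mathbb{R}^3}(h)(0,0,\cdot)\bigr)(u)
$$
with the marginal $\int_{\mathbb{R}^2} h(v_1,v_2,u)\,dv_1dv_2$, and then to read off each of the four asserted properties from this representation. Nonnegativity of $\tilde h$ is immediate from $h\ge 0$. Evenness follows from the radial symmetry of $h$: writing $h(v)=H(|v|)$, we have $h(v_1,v_2,-u)=H(\sqrt{v_1^2+v_2^2+u^2})=h(v_1,v_2,u)$, so $\tilde h(-u)=\tilde h(u)$.

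For the weighted $L^1$ bound, I would use that $\langle u\rangle\le \langle (v_1,v_2,u)\rangle$ for every $(v_1,v_2)\in\mathbb{R}^2$, combined with Tonelli:
\begin{equation*}
\int_{\mathbb{R}}\langle u\rangle^k \tilde h(u)\,du
=\int_{\mathbb{R}^3}\langle u\rangle^k h(v_1,v_2,u)\,dv
\le \|h\|_{L^1_k(\mathbb{R}^3)}.
\end{equation*}

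The main obstacle is the uniform integrability of $\tilde h$ on $\mathbb{R}$: the preimage of a small set $A\subset\mathbb{R}$ under the projection onto the third coordinate is the cylinder $\mathbb{R}^2\times A$, which has infinite Lebesgue measure, so the uniform integrability of $h$ on $\mathbb{R}^3$ cannot be invoked directly on this set. The plan is a truncation argument. Given $\varepsilon>0$, split
\begin{equation*}
\int_A\tilde h(u)\,du=\int_{\mathbb{R}^2\times A}h(v)\,dv
=\int_{(\mathbb{R}^2\times A)\cap\{|v|\le R\}}h\,dv
+\int_{(\mathbb{R}^2\times A)\cap\{|v|>R\}}h\,dv.
\end{equation*}
The tail is bounded by $R^{-k}\|h\|_{L^1_k}$, which can be made smaller than $\varepsilon/2$ by taking $R$ large depending only on $\|h\|_{L^1_k}$. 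The truncated set is contained in a cylinder of Lebesgue measure at most $\pi R^2|A|$, so if $\delta_0$ is the uniform integrability parameter of $h$ on $\mathbb{R}^3$ corresponding to tolerance $\varepsilon/2$, choosing $|A|<\delta_0/(\pi R^2)$ forces the first integral to be $<\varepsilon/2$ as well. The resulting modulus of continuity of the integral of $\tilde h$ depends only on $\|h\|_{L^1_k}$ and on the modulus of uniform integrability of $h$, so when the lemma is applied to $h=f(t,\cdot)$ with uniform-in-$t$ bounds on these two quantities (as provided by \eqref{1.3}, \eqref{1.3b}, \eqref{1.3c} and Remark \ref{rema2.1}), the uniform integrability of the projection is uniform in $t$, which is precisely what is needed in the proof of Theorem \ref{theo2} when applying Proposition \ref{prop2.0} to the projected solution.
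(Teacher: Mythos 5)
Your argument follows essentially the same route as the paper's proof of Lemma \ref{lemm2}: identify $\mathcal{F}^{-1}_{\mathbb{R}^1}\big(\mathcal{F}_{\mathbb{R}^3}(h)(0,0,\cdot)\big)$ with the marginal via \eqref{4.1+1}, read off nonnegativity, evenness and the weighted $L^1$ bound, and prove uniform integrability by truncating the cylinder $\mathbb{R}^2\times A$ to a set of measure $O(R^2|A|)$, on which the three-dimensional uniform integrability of $h$ applies, plus a far-field remainder. The one deviation is the tail estimate: the paper simply picks $R_0$ with $\int_{\{|v|\ge R_0\}}h\,dv<\varepsilon/2$ using only $h\in L^1(\mathbb{R}^3)$, whereas you invoke the Chebyshev bound $R^{-k}\|h\|_{L^1_k}$, which gives nothing when $k=0$ --- a case the lemma explicitly allows --- so for $k=0$ you must fall back on the paper's qualitative choice of $R_0$. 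For $k>0$ your version is correct and in fact slightly stronger, since it yields a modulus of uniform integrability depending only on $\|h\|_{L^1_k}$ and the modulus of $h$; this quantitative uniformity in $t$ is what the paper obtains instead through Remark \ref{remark5.1} together with the conservation laws \eqref{1.3}, \eqref{1.3b} when the lemma is applied to the solution $g(t,\cdot)$.
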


\begin{proof} By using (\ref{4.1+1}), it is evident that $h\in
L^1_k(\mathbb{R}^3)$ implies  $\mathcal{F}^{-1}_{\mathbb{R}^1}
\big(\mathcal{F}_{\mathbb{R}^3}(h)(0, 0, \cdot \,)\big)\in
L_{k}^{1}({{\mathbb{R}}})$, and $h\geq 0$ implies
$\mathcal{F}^{-1}_{\mathbb{R}^1}
\big(\mathcal{F}_{\mathbb{R}^3}(h)(0, 0, \cdot \,)\big)\geq 0$.
Hence we need only to check the uniform integrability of $
\mathcal{F}^{-1}_{\mathbb{R}^1}
\big(\mathcal{F}_{\mathbb{R}^3}(h)(0, 0, \cdot\,)\big)$ in
$\mathbb{R}$. Since $h\in L^{1}({{\mathbb{R}}}^{3})$, for any
$\varepsilon>0$, there exits $R_0>0$ such that
$$
\int_{\{v\in\mathbb{R}^3;\,\, |v|\geq R_0\}} |h(v_1, v_2, v_3)| dv_1
dv_2 dv_3< \frac{\varepsilon}2.
$$
The uniform integrability of $h$ in $\mathbb{R}^3$ imply that, there
exists $\delta_1>$ such that
$$
\int_{B} |h(v_1, v_2, v_3)| dv_1 dv_2 dv_3< \frac{\varepsilon}2,
$$
for any $B\subset \mathbb{R}^3$ with $|B|\leq \delta_1$. Choose new
  $\delta_0=\delta_1 (R_0^2)^{-1}$, then for any $A\subset
\mathbb{R}$, if $|A|\leq \delta_0$, we have
$$
\int_A|\mathcal{F}^{-1}_{\mathbb{R}^1}
\big(\mathcal{F}_{\mathbb{R}^3}(h)(0, 0, \cdot\,)\big)(u)|du \leq
\int_{\mathbb{R}^2\times A} |h(v_1, v_2, v_3)| dv_1 dv_2 dv_3
$$
$$
\leq \int_{\{(v_1, v_2, v_3)\in \mathbb{R}^3;\,\, |v_1|\leq R_0,
|v_2|\leq R_0, v_3\in A\}} |h(v_1, v_2, v_3)| dv_1 dv_2
dv_3+\frac{\varepsilon}2<\varepsilon,
$$
because of
$$
|\{(v_1, v_2, v_3)\in \mathbb{R}^3;\,\, |v_1|\leq R_0, |v_2|\leq
R_0, v_3\in A\}|\leq R_0^2|A|\leq \delta_1.
$$
\end{proof}

\begin{remark}\label{remark5.1}
In the proof of above Lemma, if ~$h\in L\log L(\RR^3)$ then $h$ is uniformly integrable in 
$\RR^3$ with $\delta_1$ depends only on $\varepsilon, \|h\|_{L\log L(\RR^3)}$ and $\|h\|_{L^1(\RR^3)}$.
Therefore, $\mathcal{F}^{-1}_{\mathbb{R}^1}
\big(\mathcal{F}_{\mathbb{R}^3}(h)(0, 0, \cdot\,)$ is uniformly integrable in 
$\RR^1$ with $\delta_0$ also depends only on $\varepsilon, \|h\|_{L\log L(\RR^3)}$ 
and $\|h\|_{L^1(\RR^3)}$.
\end{remark}

\bigbreak\noindent \textbf{End of proof of Theorem \ref{theo2}}

Suppose now $g\in L^{\infty }(]0,+\infty \lbrack ;L_{2+2s}^{1}\cap
L\log L({{\mathbb{R}}}^{3}))$ is a non negative radially symmetric
weak solution of the Cauchy problem (\ref{1.4}). Setting, for $t\geq
0, u\in\mathbb{R}$,
\begin{equation}
f(t, u)=\mathcal{F}^{-1}_{\mathbb{R}^1}
\big(\mathcal{F}_{\mathbb{R}^3}(g)(t, 0, 0, \cdot\,)\big)(u)=
\int_{{\mathbb{R}}^2} g(t, v_1, v_2, u) dv_1 dv_2\, , \label{4.2+1}
\end{equation}
hereafter, the time variable $t$ is always considered as parameters
for the Fourier transformation, then $f(t, u)$ is an even function
with respect to $u\in \mathbb{R}$, and
$$
\hat{f}(t, \tau)=\mathcal{F}_{\mathbb{R}^1} \big(f(t,
\cdot\,)\big)(\tau)=\mathcal{F}_{\mathbb{R}^3}(g)(t, 0, 0, \tau).
$$
So that the Bobylev's formula (\ref{4.1}) give, for $\xi \in
{{\mathbb{R}^3}}$,
\begin{equation}
{\mathcal{F}_{\mathbb{R}^3}}\big(Q(g, g)\big)(\xi )=\int_{-\frac{\pi }{2}}^{\frac{\pi }{2}%
}\beta (|\theta |)\left\{ \hat{f}(t, |\xi| \sin (\theta
/2))\hat{f}(t, |\xi| \cos (\theta /2))-\hat{f}(t, 0)\hat{f}(t, |\xi|
)\right\} d\theta \, \label{4.2}
\end{equation}%
where
\begin{equation*}
\beta (|\theta |)=\frac{1}{2}|\sin \theta |b(\cos \theta ).
\end{equation*}%
Then the right hand side of (\ref{4.2}) is Fourier transformation of
Kac's operator $K(f, f)$. We have proved that if $g(t, v)$ is a non
negative radially symmetric weak solution of the Cauchy problem
(\ref{1.4}), then $f(t, u)$ is a weak solution of the Cauchy problem
of Kac's equation :
\begin{equation}
\left\{
\begin{array}{ll}
\frac{\partial f}{\partial t}(t, u)=K(f, f) (t, u), \\
f(0, u)=f_{0}(u)=\int_{{\mathbb{R}}^2} g_0(v_1, v_2, u) dv_1 dv_2\,,
\end{array}
\right.  \label{5.1}
\end{equation}
or equivalently in the Fourier variable:
\begin{equation*}
\left\{
\begin{array}{ll}
\frac{\partial \hat{f}}{\partial t}(t, \tau)=\int_{-\frac{\pi
}{2}}^{\frac{\pi }{2} }\beta (|\theta |)\left\{ \hat{f}(t, \tau \sin
(\theta /2))\hat{f}(t, \tau \cos
(\theta /2))-\hat{f}(t, 0)\hat{f}(t, \tau )\right\} d\theta , \\
\hat{f}(0, \tau)=\hat{f}_{0}(\tau)=\hat{g}_0(0, 0, \tau).
\end{array}
\right.
\end{equation*}

Under the assumption of Theorem \ref{theo2} for $g(t, v)$, Lemma
\ref{lemm2} and Remark \ref{remark5.1} implies that $f(t, u)$ satisfy the hypothesis of Theorem
\ref{theo1} except $f$ belong to $L\log L$ which substituted by the
uniform integrability of $f=f_{t}(\,\cdot\,)$ in $\mathbb{R}$. As it is point out in
the Remark \ref{rema2.1}, this property is enough to assure the
coercivity (\ref{2.1}). Then we apply Theorem \ref{theo1} to the
Cauchy problem (\ref{5.1}), thus there exists $T_*>0$ such that for
$0<t\leq T_*$,
\begin{equation*}
e^{c_{0}t\langle \,|\tau |\,\rangle ^{2s^{\prime }}}\hat{f}(t, \tau
)=e^{c_{0}t\langle \,|\tau |\,\rangle ^{2s^{\prime
}}}\mathcal{F}_{\mathbb{R}^3}(g)(t, 0, 0, \tau )\in
H^{1}({{\mathbb{R}}}_{\tau }).
\end{equation*}
It remain to prove the Gevrey smoothing effect in the global time
interval. Kac's equation shares with the homogeneous Boltzmann
equation for Maxwellian molecules the existence and uniqueness
theory for the Cauchy problem, see \cite{tos-villani} for the
uniqueness of weak solution for the non-cut-off Boltzmann equation.
We take $ 0<t_{0}<t_{1}\leq T_{\ast }$, and consider the Cauchy
problem (\ref{5.1}) with even initial datum $\hat{f}(t_1,\tau)$. The
Sobolev embedding $H^{1}({{\mathbb{R}}}) \subset L^{\infty
}({{\mathbb{R}}})$ imply that
\begin{eqnarray*}
\Vert e^{c_{0}t_{1}\langle \,\cdot \,\rangle ^{2s^{\prime }}}\hat{f}%
(t_{1},\,\cdot \,)\Vert _{L^{\infty }({{\mathbb{R}}})}&\leq& C\Vert
e^{c_{0}t_{1}\langle \,\cdot \,\rangle ^{2s^{\prime
}}}\hat{f}(t_{1},\,\cdot \,)\Vert _{H^{1}({{\mathbb{R}}})}\\
& \leq& C\Vert e^{c_{0}t_{1}\langle \,|D_u| \,\rangle ^{2s^{\prime
}}}f(t_{1},\,\cdot \,)\Vert _{L^{2}_1({{\mathbb{R}}})} <+\infty
\end{eqnarray*}
Now the following propagation of Gevrey regularity results deduces
the Gevrey smoothing effect in the global time interval.

\begin{theorem}
\label{theo3.2} \textbf{(Theorem 2.3 of \cite{Des-Fu-Ter})}

Let $f_{0}$ be a non negative, even function, satisfying
\begin{equation*}
\sup_{\xi \in {{\mathbb{R}}}}\Big(|\hat{f}_{0}(\xi )|e^{c_{1}\langle
\xi \rangle ^{2s}}\Big)<+\infty ,
\end{equation*}%
for some $c_{1}>0$ and  the cross-section $\beta $ satisfying
(\ref{1.1+1}) with $0<s<1$. Then the solution of the Cauchy
problem (\ref{1.1}) satisfies $f(t,\,\cdot \,)\in G^{^{\frac{1}{2s}}}({{\mathbb{R}}}%
))$ for any $t\geq 0$.
\end{theorem}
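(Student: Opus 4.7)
The plan is to work entirely in Fourier variables via the Bobylev formula \eqref{2.2} and to propagate an $L^\infty_\xi$ bound on the modulated transform
\begin{equation*}
\varphi(t, \xi) = \hat f(t, \xi)\, e^{c(t)\langle\xi\rangle^{2s}},
\end{equation*}
for a non-increasing function $c : [0, +\infty) \to {{\mathbb{R}}}_+$ with $c(0) = c_1$ and $\inf c > 0$. The assumption on $f_0$ says exactly $\|\varphi(0, \cdot)\|_{L^\infty_\xi} < \infty$, and propagating this bound is equivalent to the Gevrey conclusion. A crucial preliminary is that Kac's equation preserves evenness (change of variables $v_* \to -v_*$ in the collision integral shows $K(f,f)(-v) = K(f,f)(v)$ when $f$ is even), so $\hat f(t, \cdot)$ is even throughout, and the conservation laws \eqref{1.3}--\eqref{1.3b} give uniform $L^1_2$ control on $f(t,\cdot)$.

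Differentiating and substituting \eqref{2.2}, I would obtain
\begin{equation*}
\partial_t\varphi(t,\xi) - c'(t)\langle\xi\rangle^{2s}\varphi(t,\xi) = \frac{e^{c(t)\langle\xi\rangle^{2s}}}{2\pi}\int_{-\pi/2}^{\pi/2}\beta(\theta)\bigl\{\hat f(\xi\sin\theta)\hat f(\xi\cos\theta) - \hat f(0)\hat f(\xi)\bigr\}\,d\theta.
\end{equation*}
Two structural facts drive the estimate. First, from $|\xi|^2 = |\xi\sin\theta|^2 + |\xi\cos\theta|^2$ and subadditivity of $x \mapsto x^s$ on ${{\mathbb{R}}}_+$ (valid for $0 < s \leq 1$), one gets $\langle\xi\rangle^{2s} \leq C_s(\langle\xi\sin\theta\rangle^{2s} + \langle\xi\cos\theta\rangle^{2s})$, and consequently the crucial multiplicative bound
\begin{equation*}
e^{c(t)\langle\xi\rangle^{2s}} \leq C\, e^{c(t)\langle\xi\sin\theta\rangle^{2s}}\, e^{c(t)\langle\xi\cos\theta\rangle^{2s}},
\end{equation*}
so the weight distributes across the gain factors and gives a pointwise control $|e^{c(t)\langle\xi\rangle^{2s}} \hat f(\xi\sin\theta)\hat f(\xi\cos\theta)| \leq C\|\varphi(t)\|_{L^\infty}^2$. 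Second, evenness of $\hat f$ together with $f(t,\cdot)\in L^1_2$ yields the Taylor-type estimates $|\hat f(\xi\sin\theta) - \hat f(0)| \leq C|\xi\sin\theta|^2$ and $|\hat f(\xi\cos\theta) - \hat f(\xi)| \leq C|\xi|^2\sin^2(\theta/2)$, so rewriting the bracket as
\begin{equation*}
[\hat f(\xi\sin\theta) - \hat f(0)]\hat f(\xi\cos\theta) + \hat f(0)[\hat f(\xi\cos\theta) - \hat f(\xi)]
\end{equation*}
produces a $\sin^2(\theta/2)$ factor matching the angular singularity $\beta(\theta)\sim|\theta|^{-1-2s}$ via \eqref{1.1+2}, and the extra $|\xi|^2$ is absorbed into $e^{c(t)\langle\xi\rangle^{2s}}$ at negligible cost.

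Combining these, I would reach a differential inequality of the form
\begin{equation*}
\frac{d}{dt}\|\varphi(t)\|_{L^\infty} \leq c'(t)\sup_\xi\bigl(\langle\xi\rangle^{2s}\varphi(t,\xi)\bigr) + C\|f_0\|_{L^1}\|\varphi(t)\|_{L^\infty} + C\|\varphi(t)\|_{L^\infty}^2.
\end{equation*}
The main obstacle is the quadratic term on the right, which by itself produces only a local-in-time propagation with finite-time blow-up of the bound. The remedy, and the heart of the Desvillettes--Furioli--Terraneo argument, is to choose $c(t)$ strictly decreasing so that $c'(t)<0$ contributes a negative sink term; the multiplicative weight inequality above, applied with an extra factor of $\langle\xi\rangle^{2s}$ gained from the product form, then allows the quadratic gain to be absorbed into this negative term (one pays a small loss in the decay rate of $c(t)$). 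A standard continuity/bootstrap argument gives $\|\varphi(t)\|_{L^\infty} \leq 2\|\varphi(0)\|_{L^\infty}$ on a short interval; iterating while ensuring $c(t)$ stays above a fixed positive threshold $c_\infty$ and using the uniqueness of weak solutions for non cut-off Kac equations (see \cite{tos-villani} for Boltzmann) yields the global bound $f(t,\cdot)\in G^{1/(2s)}({{\mathbb{R}}})$ for all $t\geq 0$.
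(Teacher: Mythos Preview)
The paper does not prove Theorem~\ref{theo3.2}: it is quoted from \cite{Des-Fu-Ter} and invoked as a black box in Section~\ref{section4} to upgrade the local-in-time Gevrey smoothing of Theorem~\ref{theo3.1} to all $t>0$. So there is no in-paper argument to compare with; your sketch should be measured against \cite{Des-Fu-Ter}. At the level of architecture you match it: Bobylev formula, propagation of $\Vert\hat f\,e^{c(t)\langle\xi\rangle^{2s}}\Vert_{L^\infty_\xi}$, subadditivity $\langle\xi\rangle^{2s}\le\langle\xi\sin\theta\rangle^{2s}+\langle\xi\cos\theta\rangle^{2s}$ to split the weight over the gain factors, evenness of $\hat f$ for second-order cancellation, and a decreasing $c(t)$ to manufacture damping.

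The gap is the sentence ``the extra $|\xi|^{2}$ is absorbed into $e^{c(t)\langle\xi\rangle^{2s}}$ at negligible cost.'' Your Taylor bounds produce, after the $\theta$-integration, a right-hand side of order $|\xi|^{2}\Vert\varphi\Vert_{\infty}$ (first piece) or $|\xi|\Vert\varphi\Vert_{\infty}$ (second piece). For $0<s<1$ these are \emph{not} dominated by the sink $c'(t)\langle\xi\rangle^{2s}\varphi$, since $|\xi|^{2}\gg|\xi|^{2s}$ as $|\xi|\to\infty$; and trading $|\xi|^{2}\le C_\varepsilon e^{\varepsilon\langle\xi\rangle^{2s}}$ would require control of $\hat f\,e^{(c(t)+\varepsilon)\langle\xi\rangle^{2s}}$, a strictly stronger quantity than the one being propagated, so the bootstrap does not close. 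Consequently the displayed differential inequality you write (with a bounded right-hand side $C\Vert\varphi\Vert_\infty+C\Vert\varphi\Vert_\infty^{2}$) is not justified by the preceding steps. The proof in \cite{Des-Fu-Ter} avoids ever generating a bare $|\xi|^{2}$: one must keep the Gevrey weight on \emph{both} gain factors and use the strict subadditivity defect $|\xi\sin\theta|^{2s}+|\xi\cos\theta|^{2s}-|\xi|^{2s}\ge 0$ together with $|\hat f(\eta)-\hat f(0)|\le C\min(1,|\eta|^{2})$ and a split of the angular integral at $|\xi\sin\theta|\sim 1$, so that after integration only a factor $|\xi|^{2s}$ survives, matching the damping exactly. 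That refinement is the missing ingredient in your sketch.
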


In conclusion, if $g\in L^{\infty }(]0,+\infty \lbrack
;L_{2+2s}^{1}\cap L\log L({{\mathbb{R}}}^{3}))$ is a non negative
radially symmetric weak solution of the Cauchy problem (\ref{1.4}),
then under the assumption of Theorem \ref{theo2}, we have proved
that for any fixed $0<t<+\infty$, there exists $c_0>0$ such that
$$
e^{c_{0}\langle |D_u|\rangle ^{2s^{\prime }}} f(t, \,\cdot\,)\in
L^2(\mathbb{R})
$$
where $f$ is the function defined by (\ref{4.2+1}). We can finish
now the proof by the following estimations, for fixed $t>0$,
\begin{eqnarray*}
\|e^{\frac{c_{0}}{2}\langle |D_v|\rangle ^{2s^{\prime }}} g(t,
\,\cdot\,)\|^2_{L^2(\mathbb{R}^3)}&=&
\int_{\mathbb{R}^3}\left|e^{\frac{c_{0}}{2}\langle |\xi|\rangle
^{2s^{\prime }}} \mathcal{F}_{\mathbb{R}^3}(g)(t, \xi_1, \xi_2,
\xi_3)\right|^2d\xi
\\
&=&\int_{\mathbb{R}^3}\left|e^{\frac{c_{0}}{2}\langle |\xi|\rangle
^{2s^{\prime }}}\mathcal{F}_{\mathbb{R}^3}(g)(t, 0, 0,
|\xi|)\right|^2d\xi\\
&=&C\int^\infty_{0}\left|e^{\frac{c_{0}}{2}\langle \tau\rangle
^{2s^{\prime }}} \hat{f}(t, \tau)\right|^2 \tau^2 d\tau
\\
&\leq& C\int^\infty_{0}\left|e^{c_{0}\langle \tau\rangle
^{2s^{\prime }}} \hat{f}(t, \tau)\right|^2 d\tau\\
&\leq& C \|e^{c_{0}\langle |D_u|\rangle ^{2s^{\prime }}} f(t,
\,\cdot\,)\|^2_{L^2(\mathbb{R})}<+\infty.
\end{eqnarray*}
We finished the proof of Theorem \ref{theo2}.


\begin{thebibliography}{99}
\bibitem{al-1} R. Alexandre, L. Desvillettes, C. Villani and B. Wennberg,
Entropy dissipation and long-range interactions, \textit{Arch. Rational
Mech. Anal.}, \textbf{152} (2000) 327-355.

\bibitem{amuxy-nonlinear-b} R.Alexandre, Y.Morimoto, S.Ukai, C.-J.Xu,
T.Yang, Uncertainty principle and kinetic equations, \textit{J. Funct. Anal.}%
, \textbf{255} (2008) 2013-2066.

\bibitem{al-2} Alexandre R., El Safadi M., Littlewood Paley decomposition and
regularity issues in Boltzmann equation homogeneous equations. I.
Non cutoff and Maxwell cases, \emph{Math. Methods, Modellings Appl.
Sci.} (2005) 8-15.

\bibitem{al-3} Alexandre R., El Safadi M., Littlewood Paley decomposition and
regularity issues in Boltzmann equation homogeneous equations. II.
Non cutoff case and non  Maxwellian molecules. \emph{Discrete
Contin. Dyn. Syst,} \textbf{24}(2009), No.1, 1-11.


\bibitem{al-4} R. Alexandre and C. Villani, On the Boltzmann equation for
long-range interaction, \textit{Comm. Pure and Appl. Math.},
\textbf{55} (2002) 30--70.

\bibitem{chen-li-xu1} H. Chen, W.-X. Li and C.-J. Xu, Propagation of Gevrey
regularity for solutions of Landau equations. \textit{Kinetic and Related
Models}, \textbf{1} (2008) 355--368.

\bibitem{D95} \textrm{L. Desvillettes,} About the regularization properties
of the non cut-off Kac equation, \textit{Comm. Math. Phys.}, \textbf{168}
(1995) 417--440.

\bibitem{Des-Fu-Ter} L. Desvillettes, G. Furioli and E. Terraneo,
Propagation of Gevrey regularity for solutions of Boltzmann equation for
Maxwellian molecules, \emph{Trans. Amer. Math. Soc.} \textbf{361} (2009)
1731-1747.

\bibitem{desv-wen1} L. Desvillettes and B. Wennberg, Smoothness of the
solution of the spatially homogeneous Boltzmann equation without cutoff.
\textit{Comm. Partial Differential Equations}, \textbf{29-1-2} (2004)
133--155.

\bibitem{fournier} Fournier N., Existence and regularity study for
two-dimensional Kac equation without cutoff by a probabilistic approach.
\textit{Ann. Appl. Proba.} \textbf{10} (2000), 434-462.

\bibitem{graham-meleard} C. Graham and S. M\'el\'eard, Existence and
regularity of a solution of a Kac equation without cutoff using the
stochastic calculus of variations, \textit{Comm. Math. Phys.} \textbf{205}
(1999) 551-569.

\bibitem{HMUY} Z. H. Huo, Y.Morimoto, S.Ukai and T.Yang, Regularity of
solutions for spatially homogeneous Boltzmann equation without Angular
cutoff. \textit{Kinetic and Related Models}, \textbf{1} (2008) 453-489.

\bibitem{MUXY-DCDS} Y.Morimoto, S.Ukai, C.-J.Xu and T.Yang, Regularity of
solutions to the spatially homogeneous Boltzmann equation without
Angular cutoff. \textit{Discrete Contin. Dyn. Syst, } \textbf{24}
(2009) 187-212.

\bibitem{mo-xu2} Y. Morimoto and C.-J. Xu, Ultra-analytic effect of Cauchy
problem for a class of kinetic equations, \textit{J. Diff. Equ.}
\textbf{247} (2009) 596-617.

\bibitem{tos-villani} G. Toscani and C. Villani, Probability metrics
and uniqueness of the solution to the Boltzmann equation for a
Maxwell gas. \textit{J. Statist. Phys.}, \textbf{94 (3-4)} (1999)
619-637.

\bibitem{ukai} S. Ukai, Local solutions in Gevrey classes to the nonlinear
Boltzmann equation without cutoff, \textit{Japan J. Appl. Math.}, \textbf{1-1%
} (1984) 141--156.

\bibitem{xu1}C.-J. Xu, \textit{Fourier analysis of non cutoff
Boltzmann equations}. Lecture notes of ``Morning side center of
mathematics'' of Chinese Academy of Sciences, \textit{to publish by
Higher Education press of China, Beijing} 117 pages.
\end{thebibliography}
\end{document}